\documentclass[12pt]{amsart}
\usepackage{amsmath,amssymb,amsbsy,amsfonts,latexsym,amsopn,amstext,cite,
                                               amsxtra,euscript,amscd,bm,mathabx}
\usepackage{url}
\usepackage[colorlinks,linkcolor=blue,anchorcolor=blue,citecolor=blue,backref=page]{hyperref}
\usepackage{color}
\usepackage{graphics,epsfig}
\usepackage{graphicx}
\usepackage{float} 
\usepackage[english]{babel}
\usepackage{mathtools}
\usepackage{todonotes}
\usepackage{url}
\usepackage[colorlinks,linkcolor=blue,anchorcolor=blue,citecolor=blue,backref=page]{hyperref}
\DeclareMathAlphabet{\mathmybb}{U}{bbold}{m}{n}

\usepackage[norefs,nocites]{refcheck}
\hypersetup{breaklinks=true}

\usepackage[norefs,nocites]{refcheck}
\usepackage[english]{babel}
\begin{document}

\newtheorem{thm}{Theorem}
\newtheorem{lem}[thm]{Lemma}
\newtheorem{claim}[thm]{Claim}
\newtheorem{cor}[thm]{Corollary}
\newtheorem{prop}[thm]{Proposition} 
\newtheorem{definition}[thm]{Definition}
\newtheorem{rem}[thm]{Remark} 
\newtheorem{question}[thm]{Open Question}
\newtheorem{conj}[thm]{Conjecture}
\newtheorem{prob}{Problem}
\newtheorem{Process}[thm]{Process}
\newtheorem{Computation}[thm]{Computation}
\newtheorem{Fact}[thm]{Fact}
\newtheorem{Observation}[thm]{Observation}

\newtheorem{lemma}[thm]{Lemma}

\newcommand{\GL}{\operatorname{GL}}
\newcommand{\SL}{\operatorname{SL}}
\newcommand{\lcm}{\operatorname{lcm}}
\newcommand{\ord}{\operatorname{ord}}
\newcommand{\Op}{\operatorname{Op}}
\newcommand{\Tr}{\operatorname{Tr}}
\newcommand{\Nm}{\operatorname{Nm}}

\numberwithin{equation}{section}
\numberwithin{thm}{section}
\numberwithin{table}{section}

\numberwithin{figure}{section}

\def\sssum{\mathop{\sum\!\sum\!\sum}}
\def\ssum{\mathop{\sum\ldots \sum}}
\def\iint{\mathop{\int\ldots \int}}

\def\wt {\mathrm{wt}}
\def\Tr {\mathrm{Tr}}

\def\SrA{\cS_r\(\cA\)}

\def\vol {{\mathrm{vol\,}}}
\def\squareforqed{\hbox{\rlap{$\sqcap$}$\sqcup$}}
\def\qed{\ifmmode\squareforqed\else{\unskip\nobreak\hfil
\penalty50\hskip1em\null\nobreak\hfil\squareforqed
\parfillskip=0pt\finalhyphendemerits=0\endgraf}\fi}

\def \ss{\mathsf{s}} 

\def \balpha{\bm{\alpha}}
\def \bbeta{\bm{\beta}}
\def \bgamma{\bm{\gamma}}
\def \blambda{\bm{\lambda}}
\def \bchi{\bm{\chi}}
\def \bphi{\bm{\varphi}}
\def \bpsi{\bm{\psi}}
\def \bomega{\bm{\omega}}
\def \btheta{\bm{\vartheta}}

\newcommand{\bfxi}{{\boldsymbol{\xi}}}
\newcommand{\bfrho}{{\boldsymbol{\rho}}}

 \def \xbar{\overline x}
  \def \ybar{\overline y}

\def\cA{{\mathcal A}}
\def\cB{{\mathcal B}}
\def\cC{{\mathcal C}}
\def\cD{{\mathcal D}}
\def\cE{{\mathcal E}}
\def\cF{{\mathcal F}}
\def\cG{{\mathcal G}}
\def\cH{{\mathcal H}}
\def\cI{{\mathcal I}}
\def\cJ{{\mathcal J}}
\def\cK{{\mathcal K}}
\def\cL{{\mathcal L}}
\def\cM{{\mathcal M}}
\def\cN{{\mathcal N}}
\def\cO{{\mathcal O}}
\def\cP{{\mathcal P}}
\def\cQ{{\mathcal Q}}
\def\cR{{\mathcal R}}
\def\cS{{\mathcal S}}
\def\cT{{\mathcal T}}
\def\cU{{\mathcal U}}
\def\cV{{\mathcal V}}
\def\cW{{\mathcal W}}
\def\cX{{\mathcal X}}
\def\cY{{\mathcal Y}}
\def\cZ{{\mathcal Z}}
\def\Ker{{\mathrm{Ker}}}

\def\NmQR{N(m;Q,R)}
\def\VmQR{\cV(m;Q,R)}

\def\Xm{\cX_{p,m}}

\def \A {{\mathbb A}}
\def \B {{\mathbb A}}
\def \C {{\mathbb C}}
\def \F {{\mathbb F}}
\def \G {{\mathbb G}}
\def \L {{\mathbb L}}
\def \K {{\mathbb K}}
\def \N {{\mathbb N}}
\def \PP {{\mathbb P}}
\def \Q {{\mathbb Q}}
\def \R {{\mathbb R}}
\def \Z {{\mathbb Z}}
\def \fS{\mathfrak S}
\def \fB{\mathfrak B}

\def\Fq{\F_q}
\def\Fqr{\F_{q^r}} 
\def\ovFq{\overline{\F_q}}
\def\ovFp{\overline{\F_p}}
\def\GL{\operatorname{GL}}
\def\SL{\operatorname{SL}}
\def\PGL{\operatorname{PGL}}
\def\PSL{\operatorname{PSL}}
\def\li{\operatorname{li}}
\def\sym{\operatorname{sym}}

\def\Mob{M{\"o}bius }

\def\fF{\EuScript{F}}
\def\M{\mathsf {M}}
\def\T{\mathsf {T}}

\def\e{{\mathbf{\,e}}}
\def\ep{{\mathbf{\,e}}_p}
\def\eq{{\mathbf{\,e}}_q}

\def\\{\cr}
\def\({\left(}
\def\){\right)}

\def\<{\left(\!\!\left(}
\def\>{\right)\!\!\right)}
\def\fl#1{\left\lfloor#1\right\rfloor}
\def\rf#1{\left\lceil#1\right\rceil}

\def\Tr{{\mathrm{Tr}}}
\def\Nm{{\mathrm{Nm}}}
\def\Im{{\mathrm{Im}}}

\def \oF {\overline \F}

\newcommand{\pfrac}[2]{{\left(\frac{#1}{#2}\right)}}

\def \Prob{{\mathrm {}}}
\def\e{\mathbf{e}}
\def\ep{{\mathbf{\,e}}_p}
\def\epp{{\mathbf{\,e}}_{p^2}}
\def\em{{\mathbf{\,e}}_m}

\def\Res{\mathrm{Res}}
\def\Orb{\mathrm{Orb}}

\def\vec#1{\mathbf{#1}}
\def \va{\vec{a}}
\def \vb{\vec{b}}
\def \vh{\vec{h}}
\def \vk{\vec{k}}
\def \vs{\vec{s}}
\def \vu{\vec{u}}
\def \vv{\vec{v}}
\def \vz{\vec{z}}
\def\flp#1{{\left\langle#1\right\rangle}_p}
\def\T {\mathsf {T}}

\def\sfG {\mathsf {G}}
\def\sfK {\mathsf {K}}

\def\mand{\qquad\mbox{and}\qquad}

\title[Distribution of sums of square roots]
{Distribution of sums of square roots modulo $1$}

\author[Siddharth Iyer] {Siddharth Iyer}
\address{School of Mathematics and Statistics, University of New South Wales, Sydney, NSW 2052, Australia}
\email{siddharth.iyer@unsw.edu.au}

\begin{abstract}
We improve upon a result of Steinerberger (2024) by demonstrating that for any fixed $k \in \mathbb{N}$ and sufficiently large $n$, there exist integers $1 \leq a_1, \dots, a_k \leq n$ satisfying:
\begin{align*}
0 < \left\| \sum_{j=1}^{k} \sqrt{a_j} \right\| = O(n^{-k/2}).
\end{align*}
The exponent $k/2$ improves upon the previous exponent of $c k^{1/3}$ of Steinerberger (2024), where $c>0$ is an absolute constant. We also show that for $\alpha \in \mathbb{R}$, there exist integers $1 \leq b_1, \dots, b_k \leq n$ such that:
\begin{align*} 
\left\| \sum_{j=1}^k \sqrt{b_j} - \alpha \right\| = O(n^{-\gamma_k}),
\end{align*}
where $\gamma_k \geq \frac{k-1}{4}$ and $\gamma_k = k/2$ when $k=2^m - 1$, $m=1,2,\dots$. Importantly, our approach avoids the use of exponential sums. 
\end{abstract}

\keywords{Square Root Sum Problem, $\sqrt{n} \mod 1$}
\subjclass[2020]{11J71}

\maketitle

\tableofcontents
\section{Introduction}
For a real number $x$ we denote $\|x\|:= \min_{m\in \mathbb{Z}}|x-m|$. Steinerberger \cite{Steinerberger}, through the use of exponential sums, has showed that for a fixed $k \in \mathbb{N}$, there exist integers $1 \leq a_{1},\ldots,a_{k} \leq n$ such that 
\begin{align*}
0<\left\|\sum_{j=1}^{k}\sqrt{a_{j}}\right\| \leq c_{k}\cdot n^{-c k^{1/3}},
\end{align*}
where $c > 0$ is an absolute constant and $c_{k}>0$.
We obtain a stronger and more explicit form of this result without the use of exponential sums.
\begin{thm}
\label{main0}
For every $k \in \mathbb{N}$, there exists an effective constant $C_{k}>0$ so that for every $n\in \mathbb{N}$ there exist integers $1 \leq a_{1},\ldots,a_{k} \leq n$ so that
\begin{align*}
0<\left\|\sum_{j=1}^{k}\sqrt{a_{j}}\right\| \leq C_{k}\cdot n^{-k/2}.
\end{align*}
\end{thm}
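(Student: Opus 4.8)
The plan is to use a pigeonhole argument on a carefully chosen family of sums, exploiting the fact that $\sqrt{a}$ for $a$ near a perfect square is very close to a linear function of the offset. Concretely, fix a large parameter $N$ (roughly of size $n^{1/2}$, so that $N^2 \le n$) and consider integers of the form $a = N^2 + t$ for small $t$. Then $\sqrt{N^2+t} = N\sqrt{1+t/N^2} = N + \frac{t}{2N} - \frac{t^2}{8N^3} + \cdots$, so modulo $1$ the quantity $\sqrt{N^2+t}$ is extremely well approximated by the rational-ish expression $\frac{t}{2N} - \frac{t^2}{8N^3} + O(t^3/N^5)$. Summing $k$ such terms with offsets $t_1,\dots,t_k$ in a range $1 \le t_j \le T$, the sum $\sum_j \sqrt{N^2+t_j}$ differs from $\sum_j\(\frac{t_j}{2N} - \frac{t_j^2}{8N^3}\)$ by only $O(kT^3/N^5)$.

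The key step is then a pigeonhole / counting argument: I would count the number of distinct values the "main part" $\sum_{j=1}^k\(\frac{t_j}{2N} - \frac{t_j^2}{8N^3}\) \bmod 1$ can take as $(t_1,\dots,t_k)$ ranges over $\{1,\dots,T\}^k$, and argue these values live in (or cluster into) a set of controlled size, forcing two distinct tuples to be within $O(1/M)$ of each other for a suitable $M$. Taking the difference of the corresponding two sums of square roots produces a sum of $2k$ (or, after cancellation, fewer) square roots of integers of size $O(N^2) = O(n)$ whose fractional part is nonzero but $O(1/M)$. The arithmetic should be arranged so that $T$ and $N$ are balanced to make $M$ as large as possible: choosing $T \asymp N$ and tracking that the dominant terms $\frac{t_j}{2N}$ contribute a "denominator" of size about $N$ per variable, while the second-order terms $\frac{t_j^2}{8N^3}$ refine this, one expects to reach a spacing of order $N^{-k} \asymp n^{-k/2}$ among the available values, which is exactly the target exponent. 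One must also ensure the difference is genuinely nonzero, which follows since distinct tuples give distinct sums of square roots (square roots of distinct squarefree-part integers are linearly independent over $\Q$), and that after combining we still only use integers in $[1,n]$, which holds as long as $N^2 + T \le n$.

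The main obstacle I anticipate is making the counting of attainable values of the main part both tight and honest: the terms $\frac{t_j}{2N}$ alone only give about $N$ distinct residues, far too few to reach $N^{-k}$, so the argument genuinely needs the second-order (and possibly higher-order) Taylor terms to spread the values out, and controlling the joint distribution of $\sum t_j$ together with $\sum t_j^2$ (and being careful that the error term $O(kT^3/N^5)$ stays below the final spacing $N^{-k}$, which forces constraints relating $k$, $T$, and $N$) is the delicate part. A clean way to handle this is probably to induct on $k$: having produced, for $k-1$ square roots, a nonzero value of size $n^{-(k-1)/2}$ using integers up to $n$, one adjoins one more square root $\sqrt{N^2+t}$ and uses the freedom in $t$ (about $N \asymp n^{1/2}$ choices, each shifting the sum by roughly $\frac{t}{2N}$, i.e.\ in steps of about $n^{-1/2}$) to land in a window of size $n^{-(k-1)/2}$ around an integer, gaining one more factor of $n^{-1/2}$; the base case $k=1$ is the classical estimate $0 < \|\sqrt{N^2+1}\| = \sqrt{N^2+1}-N \asymp N^{-1} \asymp n^{-1/2}$. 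Keeping all constants effective is routine once the induction is set up, since every step is an explicit Taylor estimate plus a pigeonhole count.
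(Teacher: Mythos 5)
Your intuition about working near perfect squares $N^2+t$ with $N\asymp n^{1/2}$ and exploiting higher-order Taylor terms is sound, but neither of the two concrete mechanisms you sketch actually closes the argument. The pigeonhole-and-difference variant produces a small value of $\bigl\|\sum_j\sqrt{u_j}-\sum_j\sqrt{v_j}\bigr\|$, which is a statement about up to $2k$ square roots appearing with both signs; it is not the required bound on $\|\sum_{j=1}^{k}\sqrt{a_j}\|$ for a single $k$-tuple. To get a one-sided statement you would need one of the $T^k$ values $\{\sum_j\sqrt{N^2+t_j}\}$ to individually fall in a window of length $\asymp T^{-k}$ near $0$, i.e.\ genuine equidistribution (essentially the exponential-sum route of Steinerberger), and the simultaneous-approximation problem hidden in controlling $\sum t_j,\ \sum t_j^2,\ \dots$ to the right moduli is nontrivial. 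The inductive route fares no better as stated: if $\bigl\|\sum_{j<k}\sqrt{a_j}\bigr\|\asymp n^{-(k-1)/2}$ is already small, then the $k$-th variable only offers shifts $\{\sqrt{N^2+t}\}\approx t/(2N)$ in increments of size $\asymp n^{-1/2}$, so $\bigl\|\sum_{j\le k}\sqrt{a_j}\bigr\|\gg\|\sqrt{a_k}\|-\|\sum_{j<k}\sqrt{a_j}\|\gg n^{-1/2}$ for $k\ge 3$; adding one coordinate does not by itself give sub-$n^{-1/2}$ resolution, and you cannot gain a factor of $n^{-1/2}$ per step this way.

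The paper's actual mechanism is a deterministic construction rather than a pigeonhole, and the induction is arranged so the gain per step is real. One works with sums of $h$ terms of the shape $a_j\sqrt{(n+j-1)^2\pm 1/v}$, with alternating signs of the perturbation, and builds a pair $\omega_+\in\mathcal{S}_{v,h,+}$, $\omega_-\in\mathcal{S}_{v,h,-}$ whose expansions in powers of $1/n$ vanish at orders $n^{-1},\dots,n^{-(h-1)}$ and begin at $n^{-h}$ with leading coefficients $\pm K(h,0)$ matching the forward difference $\Delta_h(R;n)$ of $R(n)=1/n$. The inductive step forms $r_+(n)+r_-(n+1)$: because of the alternating perturbations, the two copies of $\sqrt{(n+1)^2-1/v}$ merge, so this is a sum of $h+1$ (not $2h$) square roots; and in the $1/n$-expansion the combination behaves like a forward difference $\Delta_h(R;n)-\Delta_h(R;n+1)=\Delta_{h+1}(R;n)$, pushing the first nonvanishing term down to order $n^{-(h+1)}$. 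Iterating to $h=k$ and clearing denominators yields integers $a_i, b_i$ with $\sum_{i=1}^{k}\sqrt{a_i^2(n+i-1)^2+b_i}=\sum_i a_i(n+i-1)+G_0 n^{-k}+O(n^{-k-1})$ for a nonzero $G_0$, and since the square-root arguments are $\asymp n^2$, replacing $n$ by $\asymp\sqrt{n}$ gives exactly the stated $O(n^{-k/2})$ with the fractional part genuinely nonzero for large $n$.
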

We study the distribution of sums of square roots modulo one. Let $\lfloor x\rfloor$ denote the largest integer less than or equal to the real number $x$. The following result is obtained:
\begin{thm}
\label{main}
For every $k \in \mathbb{N}$, there exists an effective constant $D_{k} > 0$ so that for every $\alpha \in \R$ and $n \in \mathbb{N}$ there exist integers \text{ }\text{ }\text{ } $1 \leq b_{1},\ldots,b_{k}\leq n$ with
\begin{align*}
\left\|\sum_{j=1}^{k}\sqrt{b_{j}} - \alpha \right\| \leq D_{k}\cdot n^{-\gamma_{k}},
\end{align*}
where 
\begin{align*}
\gamma_{k} = 2^{\lfloor \log_{2}(k+1) \rfloor - 1} - 1/2 \geq \frac{k-1}{4}.
\end{align*}
\end{thm}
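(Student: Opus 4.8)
The plan is to reduce the statement to the case $k = 2^{m}-1$ and to prove that case by induction on $m$, the engine being a self‑referential use of the identity $\sqrt{N^{2}+c}-N = c/(\sqrt{N^{2}+c}+N)$ which, applied in a nested fashion, roughly doubles the precision exponent at each step. For the reduction, note that if $2^{m}\le k+1<2^{m+1}$ then $\lfloor\log_{2}(k+1)\rfloor=m$, so $\gamma_{k}=2^{m-1}-\tfrac12=\gamma_{k'}$ with $k'=2^{m}-1\le k$, and for this $k'$ one has $\gamma_{k'}=k'/2$; hence it is enough to produce $b_{1},\dots,b_{k'}\le n$ with $\|\sum_{j=1}^{k'}\sqrt{b_{j}}-\alpha\|\le D_{k'}n^{-k'/2}$ and then set $b_{k'+1}=\dots=b_{k}=1$, which changes $\sum_{j}\sqrt{b_{j}}\bmod 1$ by nothing since $\sqrt 1=1\in\Z$. (The inequality $\gamma_{k}\ge(k-1)/4$ is immediate from $2^{\lfloor\log_{2}(k+1)\rfloor}\ge(k+1)/2$.) The base case $m=1$ is the observation that, with $M=\lfloor\sqrt n\rfloor$, as $b$ runs over the integers in $[M^{2},(M+1)^{2}]$ the fractional parts $\sqrt b-M$ sweep $[0,1]$ with consecutive gaps $\sqrt{b+1}-\sqrt b\le 1/(2M)\le n^{-1/2}$, so any $\{\alpha\}$ is matched to within $n^{-1/2}=n^{-\gamma_{1}}$.

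For the inductive step $k\mapsto 2k+1$ I would assemble the $(2k+1)$‑term approximant from two nested copies of the $k$‑term construction together with one extra square root, arranged so that the two residual errors multiply: the target $\gamma_{2k+1}=2\gamma_{k}+\tfrac12$ means precisely $n^{-\gamma_{2k+1}}=(n^{-\gamma_{k}})^{2}\cdot n^{-1/2}$. Concretely, one first uses the $k$‑term gadget to land a sum of $k$ square roots within $n^{-\gamma_{k}}$ of a prescribed value with an explicitly known nearby integer; one then places (a rounding of) a second such quantity into the radicand of $\sqrt{N^{2}+\,\cdot\,}$ with $N$ a small constant times $\sqrt n$, and exploits $\sqrt{N^{2}+c}-N=c/(2N)+O(c^{2}/N^{3})$ to divide the residual by a further factor $\asymp\sqrt n$; the one remaining square root together with a final call to the $k$‑term gadget removes the surviving top‑order term. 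Throughout, the constraint $b_{j}\le n$ forces every auxiliary radicand to be of size a small constant multiple of $\sqrt n$, leaving linear room for the offsets, and one checks that the tails of the binomial expansions of each $\sqrt{\,\cdot\,}$ sit below $n^{-\gamma_{2k+1}}$.

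The hard part is this inductive step, namely arranging the substitution so that simultaneously (a) every radicand is an integer in $[1,n]$, (b) the quantity fed into the outer square root is close enough to its rounding that the division genuinely gains the factor $n^{-1/2}$ rather than being swamped by the rounding error, and (c) the accumulated errors from the nested expansions remain $\ll n^{-\gamma_{2k+1}}$; this is also where the dyadic shape of $\gamma_{k}$ comes from. An alternative organization I would keep in reserve is to upgrade Theorem~\ref{main0} to a net statement — showing that as the free parameters of its construction vary it already produces an $O(n^{-k/2})$‑net of a fixed interval — after which Theorem~\ref{main} follows by translating that net using one coarse square root as in the base case; but the same nested‑substitution bookkeeping reappears in proving the net version.
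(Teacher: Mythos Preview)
Your reduction to $k=2^{m}-1$ and the base case are fine, and the paper makes the same reduction. The inductive step, however, has a genuine gap. You want to feed ``a rounding of a second $k$-term quantity'' into the radicand of $\sqrt{N^{2}+c}$ and thereby gain an extra factor $n^{-1/2}$. The trouble is that the radicand must be an \emph{integer}: only the rounded value of the second $k$-term sum can enter, and once you round, the $k$ square roots that produced it are decoupled from the final sum---they are never actually added in, so those $k$ terms are wasted. If instead you add the two $k$-term blocks separately and use the single extra $\sqrt{N^{2}+c}$ to attack the residual $r$ left by the first block, then $|r|\le n^{-\gamma_{k}}\le n^{-1/2}$ forces $c=O(1)$, and the best such integer $c$ leaves an error $\asymp 1/(2N)\asymp n^{-1/2}$, which for $k\ge 2$ is \emph{worse} than $r$. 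In short, nothing in the sketch makes the two errors of size $n^{-\gamma_{k}}$ genuinely multiply; the identity $\sqrt{N^{2}+c}-N\approx c/(2N)$ divides an \emph{integer} increment by $N$, not an already-small real residual. Your own caveat (b) is precisely where the argument breaks, and it does not seem repairable within this framework.

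The paper takes a completely different, algebraic route. With $\tau=\lfloor\log_{2}(k+1)\rfloor$ it works in the multiquadratic field $\Q(\sqrt{p_{1}},\dots,\sqrt{p_{\tau}})$, where the $2^{\tau}-1$ numbers $\sqrt f$ with $f\mid p_{1}\cdots p_{\tau}$, $f>1$, are $\Q$-linearly independent (Besicovitch). Pigeonhole on integer combinations with coefficients in $[1,n]$ produces a nonzero $w=\sum_{f}c_{f}\sqrt f$ with $\|w\|\le n^{-(2^{\tau}-1)}$; the crucial matching \emph{lower} bound $\|w\|\gg n^{-(2^{\tau}-1)}$ comes from the field norm of $w$ being a nonzero rational integer. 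With both bounds in hand, a dyadic greedy scheme approximates any $\alpha$ by such a combination with coefficients $O(n)$, and finally one rewrites $c_{f}\sqrt f=\sqrt{c_{f}^{2}f}$ with $c_{f}\ll\sqrt{n}$ to obtain $2^{\tau}-1$ square roots of integers at most $n$, padding with copies of $\sqrt 1$. The doubling $\tau\mapsto\tau+1$ here reflects the dimension of the field jumping from $2^{\tau}$ to $2^{\tau+1}$, not a recursive amplification of precision.
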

Theorem \ref{main} establishes a bound of $O(n^{-\gamma_{k}})$ on the largest gap in $\sum_{j=1}^{k}\sqrt{b_{j}} \mod 1$, where $b_{j} \in 1,\ldots,n$ and $n \rightarrow \infty$. This result supplements the bound of $O(n^{-2k+3/2})$ for the smallest non-zero gap, established by a direct application of Qian and Wang \cite{Qian}. Notably, their work \cite[Theorem 2]{Qian} implies the existence integers $1\leq u_{j},v_{j} \leq n$ that satisfy
\begin{align*}
0<\left|\sum_{j=1}^{k}\sqrt{u_{j}} -\sqrt{v_{j}}\right| = O(n^{-2k+3/2}).
\end{align*}
We recall for $k = 1$, stronger and more subtle results on the distribution of gaps are known \cite{Gap1, Elkies}. See \cite{Cheng, Steinerberger} for further background in this area of study.

\section{Notation}
\label{Notation}
\begin{itemize}
\item Let $\mathbb{N} = \{1,2,\ldots\}$ be the set of natural numbers.
\item If $x \in \mathbb{R}$ we denote $\lfloor x \rfloor$ , $\lceil x\rceil$ and $\{x\}$ to be the floor, ceiling and integer part of $x$, respectively.
\item Let $U$, $V$, and $W$ be real quantities dependent on parameters $\mathbf{P}$ from some set, along with (possibly) $n \in \mathbb{N}$ and $\alpha \in \mathbb{R}$. We write $U \ll V$, $V \gg U$ or $U = O(V)$ if there exists an effective constant $c > 0$, independent of $n$ and $\alpha$, such that $|U| \leq c\cdot V$ for all choices of $n$ and $\alpha$ (with other variables fixed). Further, we write $U = V+O(W)$ if $U-V \ll W$.
\end{itemize}
\section{Proof of Theorem \ref{main0}}
For a function $f: \mathbb{N}\rightarrow \mathbb{R}$, we let $\Delta_{1}(f;n) = f(n)$ and if $j \in \mathbb{N}$ with $j \geq 2$, we let 
\begin{align*}
\Delta_{j}(f;n) = \Delta_{j-1}(f;n) - \Delta_{j-1}(f;n+1).
\end{align*}
Let $R: \mathbb{N}\rightarrow \mathbb{R}$ satisfy $R(n) = \frac{1}{n}$.

\begin{lem}
\label{RecDiff}
For every $j \in \mathbb{N}$, there exist rational constants $K(j,0),K(j,1),\ldots$ with $K(j,0) >0$ so that if $T \in \mathbb{N}$, we have
\begin{align*}
\Delta_{j}(R;n) = \sum_{t=0}^{T}\frac{K(j,t)}{n^{j+t}}+O(n^{-j-T-1}).
\end{align*}
\end{lem}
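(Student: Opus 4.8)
The plan is to reduce the lemma to an explicit finite-difference formula together with a truncated geometric expansion. First I would prove by induction on $j$, directly from the defining recursion and Pascal's rule, that for any $f\colon\mathbb{N}\to\mathbb{R}$
\begin{equation*}
\Delta_{j}(f;n) = \sum_{i=0}^{j-1}(-1)^{i}\binom{j-1}{i}f(n+i),
\end{equation*}
so that $\Delta_{j}$ is, up to sign, the $(j-1)$-st iterated forward difference operator. Thus everything reduces to understanding these alternating sums for $f=R$, i.e. for $f(n)=1/n$.

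Next, I would expand each term $1/(n+i)$ with $0\le i\le j-1$ as a truncated geometric series with explicit remainder: for $n$ large enough that $i/n<1$,
\begin{equation*}
\frac{1}{n+i} = \sum_{t=0}^{j+T-1}\frac{(-i)^{t}}{n^{t+1}} + O\!\left(n^{-(j+T+1)}\right),
\end{equation*}
where the implied constant depends only on $j$ and $T$ because there are only finitely many admissible $i$. Substituting this into the formula above and interchanging the two finite sums yields
\begin{equation*}
\Delta_{j}(R;n) = \sum_{t=0}^{j+T-1}\frac{(-1)^{t}}{n^{t+1}}\Bigg(\sum_{i=0}^{j-1}(-1)^{i}\binom{j-1}{i}i^{t}\Bigg) + O\!\left(n^{-(j+T+1)}\right).
\end{equation*}

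The combinatorial input is the classical identity $\sum_{i=0}^{m}(-1)^{i}\binom{m}{i}i^{t} = (-1)^{m}\,m!\,S(t,m)$, where $S(\cdot,\cdot)$ denotes a Stirling number of the second kind; with $m=j-1$ this sum vanishes for $0\le t\le j-2$ and equals $(-1)^{j-1}(j-1)!$ at $t=j-1$. Hence the lowest surviving power of $n$ is $n^{-j}$, and re-indexing $t=j-1+s$ gives
\begin{equation*}
\Delta_{j}(R;n) = \sum_{s=0}^{T}\frac{(-1)^{s}(j-1)!\,S(j-1+s,\,j-1)}{n^{j+s}} + O\!\left(n^{-(j+T+1)}\right).
\end{equation*}
So one may take $K(j,s)=(-1)^{s}(j-1)!\,S(j-1+s,j-1)\in\mathbb{Q}$, and in particular $K(j,0)=(j-1)!>0$, as required. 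Equivalently, one can run the whole argument by induction on $j$ straight from the recursion $\Delta_{j}(R;n)=\Delta_{j-1}(R;n)-\Delta_{j-1}(R;n+1)$, expanding $(n+1)^{-(j-1+t)}=n^{-(j-1+t)}(1+1/n)^{-(j-1+t)}$ and matching coefficients; this produces the recursion $K(j,0)=(j-1)K(j-1,0)$ with $K(1,0)=1$.

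I do not expect a genuine obstacle here; the content is essentially bookkeeping. The only points needing care are: (i) making the remainder uniform, i.e. verifying that after summing the $O(1)$ many contributions in $i$ against bounded binomial coefficients the total error is still $O(n^{-(j+T+1)})$, which is what forces the truncation at $t=j+T-1$ above; and (ii) correctly invoking the vanishing of the low-order finite differences of monomials, so that the expansion genuinely starts at the $n^{-j}$ term rather than at $n^{-1}$, which is precisely what makes $K(j,0)=(j-1)!$ the leading coefficient.
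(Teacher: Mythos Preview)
Your argument is correct. The route differs from the paper's: the paper first proves by induction the closed-form identity
\[
\Delta_{j}(R;n)=\frac{(j-1)!}{\prod_{m=0}^{j-1}(n+m)}
\]
and then expands $n^{-j}\prod_{m=1}^{j-1}(1+m/n)^{-1}$ term by term. That product form immediately makes visible that the expansion begins at $n^{-j}$ with leading coefficient $(j-1)!$, so no cancellation argument is needed. Your approach instead writes $\Delta_{j}$ as the alternating binomial sum $\sum_i(-1)^i\binom{j-1}{i}R(n+i)$, expands each summand, and then invokes the Stirling-number identity to kill the terms of order $n^{-1},\dots,n^{-(j-1)}$. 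Both are short and elementary; the paper's version trades the combinatorial input for a one-line induction on the product formula, while your version has the bonus of producing the explicit evaluation $K(j,s)=(-1)^{s}(j-1)!\,S(j-1+s,j-1)$ for every coefficient, not just the leading one.
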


\begin{proof}
For $j=1$, this is trivially true. Assume that $j\geq 2$. By induction, we verify that 
\begin{align*}
\Delta_{j}(R;n) = \frac{(j-1)!}{\prod_{m=0}^{j-1}(n+m)}.
\end{align*}
We write
\begin{align*}
\prod_{m=0}^{j-1}(n+m)^{-1} &= n^{-j}\prod_{m=1}^{j-1}\left(1+\frac{m}{n}\right)^{-1}\\
&= n^{-j}\prod_{m=1}^{j-1}\left(\sum_{w=0}^{T}\frac{(-1)^{w}m^{w}}{n^w} + O(n^{-T-1})\right)
\end{align*}
and the result follows.
\end{proof}
For $w \in \mathbb{N}\cup\{0\},$ let $c_{w}$ be the rational number such that 
\begin{align*}
\frac{d^{w}}{dz^w}(z^{1/2}) = c_{w}z^{1/2-w},
\end{align*}
and set
\begin{align*}
C_{w} = \frac{c_{w}}{w!}.
\end{align*}

\begin{lem}
\label{RootSeries}
If $c \in \mathbb{R}$ is a constant, then for every $T \in \mathbb{N}$ and the parameter $n \in \mathbb{N}$, we have
\begin{align*}
\sqrt{n^2+c} = n+\sum_{w=1}^{T}C_{w}c^w n^{-2w+1} + O(n^{-2T-1}).
\end{align*}
\end{lem}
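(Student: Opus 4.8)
The plan is to recognize the asserted identity as Taylor's theorem for the function $f(z) = z^{1/2}$ expanded about the base point $z = n^2$ with increment $c$, together with a crude estimate for the remainder. Since $c$ and $T$ are held fixed, the implied constant in the error term is allowed to depend on both, which is what reduces the whole argument to a short computation.

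First I would record the shape of the derivatives. By the defining relation for $c_w$ we have $f^{(w)}(z) = c_w z^{1/2-w}$ for every $w \geq 0$; in particular $c_0 = 1$, so $C_0 = 1$, and $f^{(w)}(n^2) = c_w n^{1-2w}$. Hence the proposed main term is precisely the degree-$T$ Taylor polynomial of $f$ about $n^2$, evaluated at $n^2 + c$:
\[ n + \sum_{w=1}^{T} C_w c^w n^{-2w+1} = \sum_{w=0}^{T} \frac{f^{(w)}(n^2)}{w!} c^w . \]

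Second, I would restrict to $n > \sqrt{|c|}$, so that $n^2 + c > 0$; the finitely many smaller $n$ (for which the statement is understood only when the left-hand side is defined) contribute at most to the implied constant. On the closed interval with endpoints $n^2$ and $n^2 + c$ the function $f$ is smooth and bounded away from $0$, so Taylor's theorem with Lagrange remainder gives
\[ \sqrt{n^2 + c} = \sum_{w=0}^{T} \frac{f^{(w)}(n^2)}{w!} c^w + \frac{f^{(T+1)}(\xi)}{(T+1)!} c^{T+1} \]
for some $\xi$ between $n^2$ and $n^2 + c$. For $n$ larger than a threshold depending only on $c$ we have $\xi \geq n^2/2$, whence
\[ \left| \frac{f^{(T+1)}(\xi)}{(T+1)!} c^{T+1} \right| = \frac{|c_{T+1}|\,|c|^{T+1}}{(T+1)!}\, \xi^{-T-1/2} \leq \frac{|c_{T+1}|\,|c|^{T+1}}{(T+1)!}\left(\frac{n^2}{2}\right)^{-T-1/2} \ll n^{-2T-1}, \]
with the implied constant depending only on $c$ and $T$. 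Combining the last two displays yields the lemma.

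I do not anticipate any genuine obstacle. The only points needing a moment's care are the domain condition that keeps $z^{1/2}$ smooth along the segment from $n^2$ to $n^2 + c$ (which is what forces the passage to large $n$) and the elementary bound $\xi \geq n^2/2$; both hold as soon as $n$ is large relative to $|c|$, and neither is delicate. If one preferred to avoid quoting Taylor's theorem, one could instead argue by induction on $T$ using $\sqrt{n^2 + c} = n\sqrt{1 + c/n^2}$ together with the finite expansion of $\sqrt{1+x}$, but the route above is the most direct.
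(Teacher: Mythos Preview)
Your argument is correct and is essentially identical to the paper's own proof: both expand $\sqrt{z}$ about $z=n^2$ to order $T$ via Taylor's theorem with Lagrange remainder and bound the remainder using $\xi\asymp n^2$. Your version is slightly more explicit about the domain restriction and the threshold on $n$, but the method is the same.
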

\begin{proof}
The $T$-th order Taylor polynomial of $\sqrt{z}$ at the point $n^2$ is given by 
\begin{align*}
Q_{T,n}(z) = \sum_{w=0}^{T}C_{w}(n^2)^{1/2-w}(z-n^2)^w = n+\sum_{w=1}^{T}C_{w}n^{-2w+1}(z-n^2)^w.
\end{align*}
By the Taylor mean value theorem, we have the estimate
\begin{align*}
\sqrt{z} - Q_{T,n}(z) = \frac{c_{n}z_{n}^{1/2 -T-1}}{(T+1)!}(z-n^2)^{T+1} = C_{T+1}z_{n}^{-\frac{1}{2}-T}(z-n^2)^{T+1},
\end{align*}
where $z_{n}$ lies between $z$ and $n^2$. We now observe that
\begin{align*}
\sqrt{n^2+c} - Q_{y,n}(n^2+c) = O(n^{-2T-1}).
\end{align*}
\par \vspace{-\baselineskip} \qedhere
\end{proof}
\begin{lem}
\label{ReciprocalSeries}
Let $t,d$ and $H$ be integers where $t,H \geq 1$. There exist rational numbers $P(t,d,0),P(t,d,1),\ldots$ with $P(t,d,0)  =1$ and $P(t,d,1) = -dt$ so that
\begin{align*}
\frac{1}{(n+d)^t} = \sum_{q=0}^{H}\frac{P(t,d,q)}{n^{t+q}}+O\left(\frac{1}{n^{t+H+1}}\right)
\end{align*}
as $n \rightarrow \infty$.
\end{lem}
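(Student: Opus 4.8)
The plan is to establish Lemma \ref{ReciprocalSeries} by the same elementary device already used in the proof of Lemma \ref{RecDiff}: write $(n+d)^{-t} = n^{-t}(1+d/n)^{-t}$ and expand the second factor via the (generalized) binomial series, with the tail controlled by Taylor's theorem with remainder. Concretely, I would set $x = d/n$ and use
\begin{align*}
(1+x)^{-t} = \sum_{q=0}^{H} \binom{-t}{q} x^q + O\!\left(|x|^{H+1}\right)
\end{align*}
valid for $|x| \le 1/2$, say, which holds once $n \ge 2|d|$; for the finitely many smaller $n$ the estimate is absorbed into the implied constant. Multiplying through by $n^{-t}$ gives the claimed expansion with $P(t,d,q) = \binom{-t}{q} d^q$, a rational number. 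The error term $n^{-t}\cdot O((|d|/n)^{H+1}) = O(n^{-t-H-1})$ is exactly what is asserted.

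The two explicit coefficient values are then just the first two terms of this expansion. For $q=0$ we get $\binom{-t}{0} d^0 = 1$, so $P(t,d,0) = 1$. For $q=1$ we get $\binom{-t}{1} d = (-t) d = -dt$, so $P(t,d,1) = -dt$, as stated. It is worth noting that the rationality of all $P(t,d,q)$ follows because $\binom{-t}{q} = (-1)^q \binom{t+q-1}{q}$ is an integer and $d^q$ is an integer, so in fact the $P(t,d,q)$ are integers — but the lemma only claims rationality, which suffices.

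The only mild subtlety — and the one point that needs a word of care rather than real difficulty — is making the asymptotic uniform in the right way: the statement is ``as $n \to \infty$'' with $t, d, H$ fixed integers, so the implied constant may depend on $t, d, H$, which is consistent with the $O$-convention of Section \ref{Notation} (there $t,d,H$ play the role of the fixed parameters $\mathbf{P}$). I would therefore just fix $t, d, H$, restrict to $n \ge 2|d|$ to guarantee convergence and the binomial remainder bound, and check that the finite truncation error and the contribution from small $n$ are both $O(n^{-t-H-1})$ with a constant depending only on $t,d,H$. There is no genuine obstacle here; this lemma is a routine building block, presumably to be combined with Lemmas \ref{RecDiff} and \ref{RootSeries} to expand sums like $\sqrt{(n+d)^2 + c}$ into Laurent series in $n$ with rational coefficients.
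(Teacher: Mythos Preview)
Your proposal is correct and essentially identical to the paper's own proof: the paper simply says ``Consider the Taylor series of $f(x)=x^{-t}$ at the point $n$ and substitute $n+d$ into the Taylor series. Recall the Taylor mean value theorem.'' Your binomial expansion of $(1+d/n)^{-t}$ is exactly that Taylor expansion written in the variable $x=d/n$, and your explicit identification $P(t,d,q)=\binom{-t}{q}d^q$ is in fact more detailed than what the paper records.
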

\begin{proof}
Consider the Taylor series of $f(x) = x^{-t}$ at the point $n$ and substitute $n+d$ into the Taylor series. Recall the Taylor mean value theorem.
\end{proof}
If $v \in \mathbb{N}$ and $h \in \mathbb{N}$, we let $\mathcal{S}_{v,h,+}$ to be the set of real-valued functions of the form
\begin{align*}
\sum_{j=1}^{h}a_{j}\sqrt{(n+j-1)^2+\frac{(-1)^{j-1}}{v}},
\end{align*}
where $a_{j} \in \mathbb{Q}$ is some fixed rational number with $a_{j} > 0$ and $n$ is a natural number variable. Similarly, we put $\mathcal{S}_{v,h,-}$ to be the set of real-valued functions of the form
\begin{align*}
\sum_{j=1}^{h}a_{j}\sqrt{(n+j-1)^2+\frac{(-1)^{j}}{v}}.
\end{align*}
We let 
\begin{align*}
\mathcal{S}_{v,h} = \mathcal{S}_{v,h,+}\cup\mathcal{S}_{v,h,-}.
\end{align*}
\begin{lem}
For an element $\omega \in \mathcal{S}_{v,h}$, and an natural number $T \geq 1,$ there exist rational constants $l_{-1}(\omega), l_{0}(\omega), l_{1}(\omega),\ldots$ so that 
\begin{align*}
\omega(n) = l_{-1}(\omega)\cdot n + l_{0}(\omega) + \sum_{t=1}^{T}\frac{l_{t}(\omega)}{n^t} +O(n^{-T-1}).
\end{align*}
\end{lem}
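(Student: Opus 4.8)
The plan is to reduce the statement to the three expansion lemmas already at hand. Fix $\omega \in \mathcal{S}_{v,h}$ and write $\omega(n) = \sum_{j=1}^{h} a_j \sqrt{(n+j-1)^2 + c_j}$, where $a_j \in \mathbb{Q}$, $a_j > 0$, and $c_j = (-1)^{j-1}/v$ (the case $c_j = (-1)^{j}/v$ being identical), and fix an integer $T \geq 1$. First I would apply Lemma \ref{RootSeries} to each summand with its ``$n$'' replaced by $n+j-1$, its ``$c$'' replaced by $c_j$, and its truncation parameter taken to be $T$. Since $j$ is fixed with $1 \le j \le h$, the resulting error $O((n+j-1)^{-2T-1})$ is $O(n^{-T-1})$, so
\[
\sqrt{(n+j-1)^2 + c_j} = (n+j-1) + \sum_{w=1}^{T} C_w c_j^{\,w} (n+j-1)^{-2w+1} + O(n^{-T-1}),
\]
with each $C_w$ rational and $c_j^{\,w} = (\pm 1/v)^{w}$ rational.

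Next, for each $w$ with $1 \le w \le T$ the exponent $2w-1$ is a positive integer, so I would expand $(n+j-1)^{-(2w-1)}$ by Lemma \ref{ReciprocalSeries} with $t = 2w-1$, $d = j-1$, and $H = T$; the error produced is $O(n^{-(2w-1)-T-1}) = O(n^{-T-1})$. Substituting these expansions into the display above, and writing $n+j-1 = n + (j-1)$ for the leading term, each summand of $\omega$ is exhibited as a finite $\mathbb{Q}$-linear combination of $n$, $1$, and $n^{-1}, \dots, n^{-T}$, plus an $O(n^{-T-1})$ error; the finitely many terms of order below $n^{-T}$ that arise are absorbed into that error. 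Multiplying by $a_j \in \mathbb{Q}$ and summing over the $h$ values of $j$ preserves this shape, so collecting like powers gives
\[
\omega(n) = l_{-1}(\omega)\, n + l_0(\omega) + \sum_{t=1}^{T} \frac{l_t(\omega)}{n^t} + O(n^{-T-1}),
\]
where each $l_t(\omega)$ is a finite sum of products of the rationals $a_j$, $C_w$, $c_j^{\,w}$, and the $P(2w-1, j-1, q)$ of Lemma \ref{ReciprocalSeries}, hence rational. In particular, since $-2w+1 \le -1$ for $w \ge 1$, only the leading terms $a_j(n+j-1)$ contribute to the top two coefficients, so $l_{-1}(\omega) = \sum_{j=1}^{h} a_j > 0$ and $l_0(\omega) = \sum_{j=1}^{h} a_j(j-1)$.

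It remains to observe that the constants $l_t(\omega)$ can be chosen independently of $T$, which is what lets one speak of a single sequence $l_{-1}(\omega), l_0(\omega), l_1(\omega), \dots$: enlarging $T$ only uncovers further terms of the same asymptotic expansion, so the coefficients already computed are unaffected (formally, one may fix $l_t(\omega)$ using $T = t+1$ and check consistency against larger $T$). The only point requiring care is coherence of the truncation orders so that every discarded term is genuinely $O(n^{-T-1})$; taking the truncation parameter to be $T$ uniformly in both Lemma \ref{RootSeries} and Lemma \ref{ReciprocalSeries}, and using $2w-1 \ge 1$ together with $1 \le j \le h$ to bound the errors, makes this routine, and no idea beyond the three lemmas is needed — this step is pure bookkeeping.
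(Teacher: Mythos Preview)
Your proof is correct and follows exactly the route the paper indicates: the paper's own proof is the single line ``By Lemmas~\ref{RootSeries} and~\ref{ReciprocalSeries},'' and you have simply written out the details of that reduction. (Minor quibble: you refer to ``the three expansion lemmas,'' but only two---Lemmas~\ref{RootSeries} and~\ref{ReciprocalSeries}---are actually used, matching the paper; Lemma~\ref{RecDiff} plays no role here.)
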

\begin{proof}
By Lemmas \ref{RootSeries} and \ref{ReciprocalSeries}.
\end{proof}
If $h,T \in \mathbb{N}$ and $\varepsilon > 0$, we say that two elements $\omega_{+} \in \mathcal{S}_{v,h,+}$ and $\omega_{-} \in \mathcal{S}_{v,h,-}$ form a $(T,\varepsilon,h)-$pair, if the following hold:
\begin{itemize}
\item $l_{q}(\omega_{\pm}) = 0$ for $q = 1,...,h-1$,
\item $l_{h}(\omega_{+}) = -l_{h}(\omega_{-}) = K(h,0)$,
\item $|l_{h+q}(\omega_{+})-K(h,q)| \leq \varepsilon$ for $q=1,\ldots,T$,
\item $|l_{h+q}(\omega_{-})+K(h,q)| \leq \varepsilon$ for $q =1,\ldots T$.
\end{itemize}
\begin{lem}
\label{Construction}
If $h \in \mathbb{N}$, for every $T \in \mathbb{N}$ and $\varepsilon > 0$, there exists a natural number $b(N,\varepsilon, h)$ so that if $v \geq b(T,\varepsilon, h)$, then there exist $\omega_{+} \in \mathcal{S}_{v,h,+}$ and $\omega_{-} \in \mathcal{S}_{v,h,-}$ so that they are a $(T,\varepsilon,h)-$pair.
\end{lem}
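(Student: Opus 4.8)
The plan is to recast the four conditions defining a $(T,\varepsilon,h)$-pair as a linear problem for the coefficient vectors and to solve it perturbatively in the parameter $u:=1/v$ as $v\to\infty$. Write a general element of $\mathcal{S}_{v,h,+}$ as $\omega_+=\sum_{j=1}^{h}a_j\sqrt{(n+j-1)^2+(-1)^{j-1}/v}$. Substituting Lemma~\ref{RootSeries} (with base point $n+j-1$ and $c=(-1)^{j-1}/v$) and then Lemma~\ref{ReciprocalSeries} into the preceding expansion lemma yields, for each $q\ge1$, an explicit identity
\begin{align*}
l_q(\omega_+)=\sum_{j=1}^{h}a_j\,M_{qj}(u),\qquad M_{qj}(u)=C_1(-1)^{q-1}(-1)^{j-1}(j-1)^{q-1}\,u+O(u^2),
\end{align*}
in which each $M_{qj}$ is a polynomial in $u$ with rational coefficients. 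Thus the \emph{exact} requirements ``$l_q(\omega_+)=0$ for $1\le q\le h-1$ and $l_h(\omega_+)=K(h,0)$'' form a linear system $M(u)\vec a=\vec b$ with $\vec a=(a_1,\dots,a_h)$, $\vec b=(0,\dots,0,K(h,0))$, and $M(u)=uM_0+O(u^2)$, where $(M_0)_{qj}=C_1(-1)^{q-1}(-1)^{j-1}(j-1)^{q-1}$. The matrix $M_0$ is the Vandermonde matrix on the distinct nodes $0,1,\dots,h-1$ scaled by the diagonal $\pm1$ matrices, so $\det M_0\ne0$; hence $\det M(u)=u^h(\det M_0+O(u))$ is nonzero and $M(u)^{-1}=\tfrac1u(M_0^{-1}+O(u))$ once $v$ is large, and the system has a unique rational solution $\vec a(u)=M(u)^{-1}\vec b$.

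The crux — and the step I expect to be the real obstacle — is showing that $\vec a(u)$ has \emph{strictly positive} entries for all large $v$; the system is singular at $u=0$, so $\|M(u)^{-1}\|$ blows up like $1/u$ and one must identify the leading behaviour of $\vec a(u)$. I would do this with an explicit near-solution. Combining the finite-difference identity $\Delta_h(R;n)=\sum_{m=0}^{h-1}(-1)^m\binom{h-1}{m}\frac{1}{n+m}$ with the consequence of Lemma~\ref{RootSeries}
\begin{align*}
\frac{1}{n+m}=\frac{(-1)^m v}{C_1}\Bigl(\sqrt{(n+m)^2+(-1)^m/v}-(n+m)\Bigr)+O(1/v),
\end{align*}
the choice $a_j^{(0)}:=\tfrac{1}{C_1}\binom{h-1}{j-1}v$ (positive rationals) makes $\omega_+^{(0)}$ equal to an affine function of $n$ plus $\Delta_h(R;n)$ plus a remainder that is $O(1/v)$ and whose $1/n$-expansion coefficients are individually $O(1/v)$. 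By Lemma~\ref{RecDiff} this gives $l_q(\omega_+^{(0)})=O(1/v)$ for $1\le q\le h-1$, $l_h(\omega_+^{(0)})=K(h,0)+O(1/v)$, and $l_{h+q}(\omega_+^{(0)})=K(h,q)+O(1/v)$; equivalently $M(u)\vec a^{(0)}=\vec b+\vec e(u)$ with $\vec e(u)=O(u)$. Therefore $\vec a(u)=\vec a^{(0)}-M(u)^{-1}\vec e(u)=\vec a^{(0)}+O(1)$ (using $\|M(u)^{-1}\|=O(1/u)$), and since $\binom{h-1}{j-1}\ge1$ this forces $a_j(u)=\tfrac{1}{C_1}\binom{h-1}{j-1}v+O(1)>0$ for $v$ large. (Equivalently, a direct Lagrange-interpolation computation gives $M_0^{-1}\vec b=(\tfrac{1}{C_1}\binom{h-1}{j-1})_{j=1}^{h}$, which is positive.)

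It remains to check the \emph{approximate} requirements, which come out automatically: $l_{h+q}(\omega_+)=\sum_j a_j(u)M_{h+q,j}(u)$ with $M_{h+q,j}(u)=O(u)$ and $\vec a(u)-\vec a^{(0)}=O(1)$, so $l_{h+q}(\omega_+)=l_{h+q}(\omega_+^{(0)})+O(u)=K(h,q)+O(1/v)$, whence $|l_{h+q}(\omega_+)-K(h,q)|\le\varepsilon$ for all $q=1,\dots,T$ once $v$ is large. The element $\omega_-\in\mathcal{S}_{v,h,-}$ is obtained by the mirror construction: replacing $(-1)^{j-1}$ with $(-1)^{j}$ inside the radicals replaces $M_0$ by $-M_0$ (still invertible) and $\vec b$ by $-\vec b$, while the same vector $a_j^{(0)}=\tfrac{1}{C_1}\binom{h-1}{j-1}v$ serves as near-solution (now $\omega_-^{(0)}$ equals affine minus $\Delta_h(R;n)$ plus $O(1/v)$), giving $l_h(\omega_-)=-K(h,0)$ and $l_{h+q}(\omega_-)=-K(h,q)+O(1/v)$. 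Finally set $b(T,\varepsilon,h)$ to be any natural number exceeding the finitely many thresholds used above — invertibility of both matrices, positivity of both coefficient vectors, and the $\varepsilon$-deviation bounds for $1\le q\le T$ — so that the pair $(\omega_+,\omega_-)$ constructed for $v\ge b(T,\varepsilon,h)$ is a $(T,\varepsilon,h)$-pair.
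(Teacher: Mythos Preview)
Your argument is correct and complete, but it follows a genuinely different route from the paper's. The paper proceeds by induction on $h$: the base case is the single function $2v\sqrt{n^2\pm 1/v}$, and the inductive step takes a $(T+1,\theta,m)$-pair $(r_+,r_-)$ and forms $A_+(n)=r_+(n)+r_-(n+1)$, $A_-(n)=r_+(n+1)+r_-(n)$, so that the $1/n$-tails combine via $\Delta_m(R;n)-\Delta_m(R;n+1)=\Delta_{m+1}(R;n)$; a final rescaling then yields a $(T,\varepsilon,m+1)$-pair. You instead attack all $h$ equations at once as a linear system $M(u)\vec a=\vec b$, identify the leading part $M_0$ as a signed Vandermonde on nodes $0,1,\dots,h-1$, and settle the delicate positivity question with the explicit near-solution $a_j^{(0)}=\tfrac{1}{C_1}\binom{h-1}{j-1}v$ coming from the closed-form finite-difference identity $\Delta_h(R;n)=\sum_{m}(-1)^m\binom{h-1}{m}(n+m)^{-1}$. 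Both proofs rest on the same structural fact, but the paper applies the difference operator one step at a time while you use its binomial expansion in one shot. Your approach buys explicit leading coefficients (indeed $M_0^{-1}\vec b=\tfrac{1}{C_1}\bigl(\binom{h-1}{j-1}\bigr)_j$, since $K(h,0)=(h-1)!$) and a uniform treatment of all thresholds on $v$; the paper's approach avoids the linear algebra and the $\|M(u)^{-1}\|=O(1/u)$ blow-up analysis at the cost of tracking how $(T,\theta)$ degrades along the induction.
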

\begin{proof}
We use a proof by induction: By Lemma \ref{RootSeries}, the functions 
\begin{align*}
w_{v,+}(n) = 2v\sqrt{n^2+\frac{1}{v}}
\end{align*}
and
\begin{align*}
w_{v,-}(n)=2v\sqrt{n^2-\frac{1}{v}}
\end{align*}
form a $(T,\varepsilon,1)-$pair, provided $v$ is sufficiently large (as determined by $T$ and $\varepsilon$). Suppose that the proposition has been verified for values $h=1,\ldots,m$.
Given that $r_{+}(n)$ and $r_{-}(n)$ form a $(T+1,\theta,m)-$pair, with the functions belonging to $\mathcal{S}_{v,m,+}$ and $\mathcal{S}_{v,m,-}$ respectively, we see that 
\begin{align*}
A_{+}(n) := r_{+}(n)+r_{-}(n+1) \in \mathcal{S}_{v,m+1,+}
\end{align*}
and
\begin{align*}
A_{-}(n):= r_{+}(n+1)+r_{-}(n) \in \mathcal{S}_{v,m+1,-}.
\end{align*}
We can write
\begin{align*}
A_{+}(n) &= r_{+}(n) + r_{-}(n+1) \\
&= l_{-1}(r_{+}+r_{-})(n)+l_{0}(r_{+}+r_{-})+l_{-1} +\\
&\hspace{1cm}+\sum_{g=0}^{T+1}\frac{K(m,g)}{n^{m+g}}-\sum_{g=0}^{T+1}\frac{K(m,g)}{(n+1)^{m+g}}+\\
&\hspace{2cm}+\sum_{g=1}^{T+1}\frac{\theta_{+,g}}{n^{m+g}}+\sum_{g=1}^{T+1}\frac{\theta_{-,g}}{(n+1)^{m+g}}+O(n^{-m-T-2}),
\end{align*}
where $\theta_{\pm,g}$ are rational constants satisfying $|\theta_{\pm,g}| \leq \theta$. By Lemma \ref{RecDiff}, we continue the above equation as
\begin{align*}
A_{+}(n) &= l_{-1}(r_{+}+r_{-})(n)+l_{0}(r_{+}+r_{-})+l_{-1} +\\
&\hspace{1cm}+\Delta_{m}(R;n)-\Delta_{m}(R;n+1)+\\
&\hspace{2cm}+\sum_{g=1}^{T+1}\frac{\theta_{+,g}}{n^{m+g}}+\sum_{g=1}^{T+1}\frac{\theta_{-,g}}{(n+1)^{m+g}}+O(n^{-m-T-2}).
\end{align*}
Thus by Lemma \ref{RecDiff},
\begin{align*}
A_{+}(n) &= l_{-1}(r_{+}+r_{-})(n)+l_{0}(r_{+}+r_{-})+l_{-1} +\\
&\hspace{1cm}+\sum_{g=0}^{T}\frac{K(m+1,g)}{n^{m+1+g}}+\sum_{g=1}^{T+1}\frac{\theta_{+,g}}{n^{m+g}}+\sum_{g=1}^{T+1}\frac{\theta_{-,g}}{(n+1)^{m+g}}+\\
&\hspace{2.25cm}+O(n^{-x-T-2}).
\end{align*}
By Lemma \ref{ReciprocalSeries}, $l_{q}(A_{+}) = 0$ for $q = 1,\ldots,m$. Furthermore, we record that there exists $E_{m,T} > 0$, independent of $v$, so that 
\begin{align*}
|l_{m+1+g}(A_{+}) - K(m+1,g)| \leq E_{m,T}\cdot\theta \text{ }\text{ }g=0,\ldots,T.
\end{align*}
By a similar analysis we can show $l_{q}(A_{-}) = 0$ for $q = 1,\ldots,m$ and
\begin{align*}
|l_{m+1+g}(A_{-}) + K(m+1,g)| \leq E_{m,T}\cdot\theta \text{ }\text{ }g=0,\ldots,T.
\end{align*}
In particular, if $\theta$ is sufficiently small, $\frac{K(m+1,0)}{l_{m+1}(A_{+})}\cdot A_{+}$ and $-\frac{K(m+1,0)}{l_{m+1}(A_{-})}\cdot A_{-}$ form a $(T,\varepsilon,m+1)-$pair, completing the proof. 
\end{proof}
To finish the argument, by Lemma \ref{Construction} and elementary properties of square roots, there exists $v \in \mathbb{N}$ and positive rational numbers $d_{i}$ so that 
\begin{align*}
\sum_{i=1}^{k}d_{i}\sqrt{(n+i-1)^2+\frac{(-1)^{i}}{v}} = \sum_{i=1}^{k}d_{i}(n+i-1) + \frac{G}{n^{k}} + O(\frac{1}{n^{k+1}}),
\end{align*}
for some non-zero constant $G$. By clearing denominators of rationals, we can show that there exist constants $a_{i} \in \mathbb{N}$ and $b_{i} \in \mathbb{Z}, b_{i} \neq 0$ so that
\begin{align*}
\sum_{i=1}^{k}\sqrt{a_{i}^2(n+i-1)^2+b_{i}} = \sum_{i=1}^{k}a_{i}(n+i-1) + \frac{G_{0}}{n^{k}} + O(\frac{1}{n^{k+1}}),
\end{align*}
for some non-zero constant $G_{0}$.
\section{Proof of Theorem \ref{main}}
Let $\tau \in \mathbb{N}$. We supplement the notation described in Section \ref{Notation} with the following:
\begin{itemize}
\item $p_{i}$ is the $i-$th prime,
\item $P_{\tau} = \{\prod_{j=1}^{\tau}p_{j}^{a_{j}}:a_{j}\in \{0,1\}\},$ so that $P_{\tau}$ is the set of $2^{\tau}$ natural numbers dividing the product $p_{1}\ldots p_{\tau}$,
\item $P_{\tau}^{*} := P_{\tau}\setminus\{1\}$,
\item $S_{\tau} := \{\sum_{f \in P_{\tau}^{*}}c_{f}\sqrt{f}: c_{f}\in \mathbb{Z}\}$,
\item $S_{\tau}^{*} = S_{\tau}\setminus \{0\}$.
\end{itemize}
An important tool in our argument is the following result of Besicovitch \cite[Theorem 2]{Besico} presented in an alternate form.
\begin{lem}
\label{BesicoLem}
Every element $w \in \mathbb{Q}(\sqrt{p_{1}},\ldots,\sqrt{p_{\tau}})$ can be uniquely expressed in the form
\begin{align*}
w = \sum_{f \in P_{\tau}}c_{f}(w)\sqrt{f},
\end{align*}
where the coefficients $c_{f}(w)$ are rational.
\end{lem}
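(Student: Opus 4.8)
The statement is equivalent to saying that the $2^{\tau}$ real numbers $\{\sqrt{f}:f\in P_{\tau}\}$ form a basis of $\mathbb{Q}(\sqrt{p_{1}},\ldots,\sqrt{p_{\tau}})$ as a $\mathbb{Q}$-vector space: the existence of an expansion $w=\sum_{f\in P_{\tau}}c_{f}(w)\sqrt{f}$ is the spanning property, its uniqueness is linear independence, and once these hold the coefficients $c_{f}(w)$ are automatically rational, being the coordinates of $w$ in a $\mathbb{Q}$-basis. So the plan is to verify spanning and linear independence in turn.

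For spanning, I would observe that $\mathbb{Q}(\sqrt{p_{1}},\ldots,\sqrt{p_{\tau}})=\mathbb{Q}[\sqrt{p_{1}},\ldots,\sqrt{p_{\tau}}]$, since each $\sqrt{p_{j}}$ is a root of $x^{2}-p_{j}\in\mathbb{Q}[x]$, so the ring generated by the $\sqrt{p_{j}}$ over $\mathbb{Q}$ is already the field. This ring is spanned over $\mathbb{Q}$ by the monomials $\prod_{j=1}^{\tau}(\sqrt{p_{j}})^{e_{j}}$ with $e_{j}\ge 0$; using $(\sqrt{p_{j}})^{2}=p_{j}\in\mathbb{Q}$ to reduce each exponent modulo $2$, the monomials with all $e_{j}\in\{0,1\}$ already span, and these are precisely $\sqrt{f}$ with $f=\prod_{j:\,e_{j}=1}p_{j}\in P_{\tau}$. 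This yields the desired expansion of an arbitrary $w$ and, as a byproduct, $[\mathbb{Q}(\sqrt{p_{1}},\ldots,\sqrt{p_{\tau}}):\mathbb{Q}]\le 2^{\tau}$.

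It remains to prove the linear independence of $\{\sqrt{f}:f\in P_{\tau}\}$ over $\mathbb{Q}$, which is the only substantive point and is exactly what Besicovitch's theorem supplies: \cite[Theorem 2]{Besico} asserts that the square roots of finitely many distinct squarefree positive integers are $\mathbb{Q}$-linearly independent, and applying this to the pairwise distinct squarefree integers $f\in P_{\tau}$ gives precisely the needed independence, hence uniqueness of the expansion. (A self-contained derivation would go by induction on $\tau$: the $\mathbb{Q}$-span $V_{\tau}$ of $\{\sqrt{f}:f\in P_{\tau}\}$ is closed under multiplication — for $f=\prod_{j\in A}p_{j}$, $g=\prod_{j\in B}p_{j}$ one has $\sqrt{f}\,\sqrt{g}=\big(\prod_{j\in A\cap B}p_{j}\big)\sqrt{\prod_{j\in(A\setminus B)\cup(B\setminus A)}p_{j}}$, a rational multiple of some $\sqrt{f'}$ with $f'\in P_{\tau}$ — so $V_{\tau}$ is a finite-dimensional $\mathbb{Q}$-subalgebra of $\mathbb{R}$, hence a field, and it suffices to check that $p_{\tau}$ is not a square in $\mathbb{Q}(\sqrt{p_{1}},\ldots,\sqrt{p_{\tau-1}})$, which follows from an elementary $p_{\tau}$-adic valuation argument.) The main obstacle is exactly this non-collapse of the degree, i.e.\ the absence of any hidden $\mathbb{Q}$-linear relation among the $\sqrt{f}$, and in the write-up I would dispatch it by citing Besicovitch.
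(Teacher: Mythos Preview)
Your proposal is correct and matches the paper's approach: the paper does not supply a proof of this lemma but simply presents it as a reformulation of Besicovitch's result \cite[Theorem~2]{Besico}, and your argument likewise reduces the statement to the spanning property (which you observe is elementary) together with the $\mathbb{Q}$-linear independence of the $\sqrt{f}$, which you dispatch by the same citation. Your additional sketch of a self-contained inductive argument is a nice bonus but not needed for alignment with the paper.
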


For an element $w \in \mathbb{Z}[\sqrt{p_{1}},\ldots,\sqrt{p_{\tau}}]$ we define $M(w) = \max_{f\in P_{\tau}}|c_{f}(w)|$, which is well defined by Lemma \ref{BesicoLem}.

\begin{lem}
\label{ProductBound}
If $w_{1},w_{2} \in \mathbb{Z}[\sqrt{p_{1}},\ldots,\sqrt{p_{\tau}}]$ then
\begin{align*}
M(w_{1}w_{2})\leq M(w_{1})M(w_{2})\prod_{j=1}^{\tau}(p_{j}+1).
\end{align*}
\end{lem}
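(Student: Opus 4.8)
The plan is to expand each element of $\mathbb{Z}[\sqrt{p_1},\ldots,\sqrt{p_\tau}]$ in the Besicovitch basis $\{\sqrt{f} : f \in P_\tau\}$ and track how coefficients behave under multiplication. First I would write $w_1 = \sum_{f \in P_\tau} c_f(w_1)\sqrt{f}$ and $w_2 = \sum_{g \in P_\tau} c_g(w_2)\sqrt{g}$ using Lemma \ref{BesicoLem}, so that
\begin{align*}
w_1 w_2 = \sum_{f,g \in P_\tau} c_f(w_1) c_g(w_2) \sqrt{f}\sqrt{g}.
\end{align*}
The key observation is that for $f, g \in P_\tau$, the product $\sqrt{f}\sqrt{g} = \sqrt{fg}$, and since $f = \prod_j p_j^{a_j}$ and $g = \prod_j p_j^{b_j}$ with $a_j, b_j \in \{0,1\}$, we have $fg = \prod_j p_j^{a_j + b_j}$ where each exponent is in $\{0,1,2\}$. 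Pulling out squares, $\sqrt{fg} = \left(\prod_{j : a_j = b_j = 1} p_j\right) \sqrt{h}$, where $h = \prod_{j : a_j + b_j = 1} p_j \in P_\tau$ is the "squarefree part." Thus each term $c_f(w_1)c_g(w_2)\sqrt{f}\sqrt{g}$ contributes to the coefficient of a single basis element $\sqrt{h}$, with the contribution being $c_f(w_1)c_g(w_2)$ times an integer that divides $p_1 \cdots p_\tau$ and hence is at most $\prod_{j=1}^\tau p_j$.

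Next I would bound, for each fixed $h \in P_\tau$, the coefficient $c_h(w_1 w_2)$ by summing the absolute values of all contributions. Each such contribution is at most $M(w_1) M(w_2) \cdot \left(\prod_{j : a_j = b_j = 1} p_j\right)$. The number of pairs $(f,g)$ yielding a given $h$ needs to be counted: fixing $h$ and the "overlap set" $O = \{j : a_j = b_j = 1\}$ (which must be disjoint from the support of $h$), the pair $(f,g)$ is determined once we decide, for each $j$ in the support of $h$, whether it comes from $f$ or from $g$. Summing over all choices of $O \subseteq \{1,\ldots,\tau\} \setminus \mathrm{supp}(h)$ and all such splittings gives a total weighted count bounded by $\prod_{j \in \mathrm{supp}(h)} 2 \cdot \prod_{j \notin \mathrm{supp}(h)} (1 + p_j) \leq \prod_{j=1}^\tau (p_j + 1)$, since $2 \leq p_j + 1$ always and the $p_j$ factor from the overlap is absorbed into the $(1 + p_j)$ term. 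Combining, $|c_h(w_1 w_2)| \leq M(w_1) M(w_2) \prod_{j=1}^\tau (p_j + 1)$ for every $h$, which gives the claimed bound on $M(w_1 w_2)$.

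The main obstacle is organizing the combinatorial bookkeeping cleanly: one must carefully separate, for each product term, the integer factor $\prod_{j \in O} p_j$ that is pulled out of the radical from the residual squarefree index $h$, and then verify that summing the weights $\prod_{j \in O} p_j$ over all $(f,g)$ mapping to a fixed $h$ telescopes into the product $\prod_{j=1}^\tau (p_j+1)$ without overcounting. Once the correspondence $(f,g) \mapsto (h, O, \text{splitting of } \mathrm{supp}(h))$ is set up as a bijection onto its image, the estimate is a routine expansion of $\prod_j (1 + p_j)$ — each factor $1 + p_j$ accounting for the three mutually exclusive possibilities at prime $p_j$: absent from both (contributing $1$), present in exactly one (contributing $1$, with a factor of $2$ for which one, but absorbed since $2 \le p_j+1$ is not tight enough alone — so instead I would note the support-of-$h$ primes contribute a factor $2 \le p_j+1$ and the overlap primes contribute a factor $p_j \le p_j + 1$, and these are disjoint sets of primes). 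I would double-check that this disjointness is what makes the bound $\prod_{j=1}^\tau(p_j+1)$ valid rather than something larger.
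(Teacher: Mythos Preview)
Your argument is correct. The combinatorial count you set up does collapse cleanly: for a fixed target $h$ with support $H$, the weighted sum over contributing pairs $(f,g)$ factors as $\prod_{j\in H}2\cdot\prod_{j\notin H}(1+p_j)$, and since $2\le p_j+1$ for every prime $p_j$ this is at most $\prod_{j=1}^\tau(p_j+1)$.

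The paper reaches the same bound by a shorter route that sidesteps the bijection bookkeeping entirely. It first observes the one-variable fact $M(\sqrt{p_j}\,w)\le p_j\,M(w)$ (since multiplying by $\sqrt{p_j}$ permutes the basis indices and introduces at most a factor $p_j$ in any coefficient), and then iterates to get $M(\sqrt{f}\,w)\le f\cdot M(w)$ for every $f\in P_\tau$. Writing $w_2=\sum_{f\in P_\tau}a_f\sqrt{f}$ and using sub-additivity of $M$ gives
\[
M(w_1w_2)\le \sum_{f\in P_\tau}|a_f|\,M(\sqrt{f}\,w_1)\le M(w_1)M(w_2)\sum_{f\in P_\tau} f = M(w_1)M(w_2)\prod_{j=1}^{\tau}(p_j+1).
\]
In effect, the paper absorbs your pair-counting into the single identity $\sum_{f\in P_\tau}f=\prod_j(1+p_j)$; your approach is more explicit about where each factor of $p_j+1$ comes from, while the paper's is quicker and avoids any case analysis on the support of $h$.
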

\begin{proof}
Let $w \in \mathbb{Z}[\sqrt{p_{1}},\ldots,\sqrt{p_{\tau}}]$, if $j \in \{1,\ldots,\tau\}$ then we compute that $M(\sqrt{p_{j}}w) \leq p_{j}M(w)$. Inductively it follows that if $f \in P_{\tau}$ then $M(w\sqrt{f}) \leq f\cdot M(w)$. Now suppose that $w_{2} = \sum_{f \in P_{\tau}}a_{f}\sqrt{f}$ where $a_{f} \in \mathbb{Z}$, then
\begin{align*}
M(w_{1}w_{2}) &\leq \sum_{f \in P_{\tau}}f|a_{f}|M(w_{1})\\
&\leq \sum_{f \in P_{\tau}}fM(w_{2})M(w_{1}) = M(w_{1})M(w_{2})\prod_{j=1}^{\tau}(p_{j}+1).
\end{align*}
\par \vspace{-\baselineskip} \qedhere
\end{proof}
\begin{lem}
\label{UpperBound}
For every $n \in \mathbb{N}$ there exists $w \in S_{\tau}^{*}$ with $M(w) \leq n$ so that
\begin{align*}
\|w\|\leq \frac{1}{n^{2^{\tau}-1}}.
\end{align*}
\end{lem}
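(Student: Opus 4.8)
The plan is to count lattice points and apply a pigeonhole/Dirichlet-type argument inside the interval $[0,1)$. Fix $n \in \mathbb{N}$ and consider the finite set of elements
\begin{align*}
w = \sum_{f \in P_{\tau}^{*}} c_{f}\sqrt{f}, \qquad c_{f} \in \{0, 1, \dots, n\},
\end{align*}
of which there are $(n+1)^{|P_{\tau}^{*}|} = (n+1)^{2^{\tau}-1}$. Each such $w$ lies in a bounded interval $[0, K n]$ where $K = \sum_{f \in P_{\tau}^{*}}\sqrt{f}$ depends only on $\tau$. First I would record that all these elements are \emph{distinct} as real numbers: by Lemma \ref{BesicoLem} (Besicovitch), the numbers $\{\sqrt{f} : f \in P_{\tau}\}$ are linearly independent over $\mathbb{Q}$, so two different coefficient vectors give different reals. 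Hence we have $(n+1)^{2^{\tau}-1}$ distinct points, each with $M(w) \le n$ by construction.

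Next I would look at the fractional parts $\{w\}$ of these points. If two distinct such elements $w_{1}, w_{2}$ satisfy $\|w_{1} - w_{2}\| \le 1/n^{2^{\tau}-1}$, then $w := w_{1} - w_{2}$ is a nonzero element of $S_{\tau}$ (it is nonzero by the distinctness just established) with $\|w\| \le 1/n^{2^{\tau}-1}$; moreover its coefficients are differences of the $c_{f}$'s, so $M(w) \le n$, giving exactly the claimed $w \in S_{\tau}^{*}$. To force such a pair to exist, partition $[0,1)$ into $n^{2^{\tau}-1}$ subintervals of length $1/n^{2^{\tau}-1}$; since we have strictly more than $n^{2^{\tau}-1}$ points (namely $(n+1)^{2^{\tau}-1}$ of them, and $(n+1)^{2^{\tau}-1} > n^{2^{\tau}-1}$) whose fractional parts all lie in $[0,1)$, the pigeonhole principle yields two points $w_{1} \ne w_{2}$ with $\{w_{1}\}$ and $\{w_{2}\}$ in the same subinterval, so $\|w_{1}-w_{2}\| \le 1/n^{2^{\tau}-1}$. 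This completes the argument.

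The only genuine subtlety — and the step I would be most careful about — is ensuring the difference $w = w_{1} - w_{2}$ is truly \emph{nonzero} as an element of the field, so that it lands in $S_{\tau}^{*}$ and one may speak of $\|w\| \le 1/n^{2^{\tau}-1}$ nontrivially; this is precisely where the uniqueness in Besicovitch's theorem (Lemma \ref{BesicoLem}) is essential, since it guarantees distinct coefficient tuples yield distinct elements. A minor point is the bound $M(w) \le n$: since each coefficient of $w$ is $c_{f}(w_{1}) - c_{f}(w_{2})$ with both terms in $\{0,\dots,n\}$, the difference has absolute value at most $n$, so $M(w) \le n$ as required. Everything else is a routine pigeonhole count, and no use of exponential sums is needed.
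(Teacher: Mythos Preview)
Your proposal is correct and follows essentially the same approach as the paper: build a family of sums indexed by coefficient vectors, invoke Besicovitch (Lemma~\ref{BesicoLem}) to guarantee these sums are pairwise distinct, and apply the Dirichlet/pigeonhole principle on $\mathbb{R}/\mathbb{Z}$ to force a small nonzero difference with $M(w)\le n$. The only cosmetic difference is that the paper uses coefficients in $\{1,\dots,n\}$ while you use $\{0,\dots,n\}$; your write-up is in fact more explicit about why the resulting $w$ lies in $S_{\tau}^{*}$ and satisfies $M(w)\le n$.
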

\begin{proof}
Let
\begin{align*}
A_{n} = \left\{\sum_{f \in P_{\tau}^{*}}d_{f}\sqrt{f}: d_{f}\in \{1,\ldots,n\}\right\}.
\end{align*}
By Lemma \ref{BesicoLem} we have $\# A_{n} = n^{2^{\tau}-1}$. After applying the Dirichlet principle to the unit torus $\mathbb{R}/\mathbb{Z}$, the proposition now follows.
\end{proof}
\begin{lem}
\label{LowerBound}
If $n \in \mathbb{N}$ and $w \in \mathbb{Z}[\sqrt{p_{1}},\ldots,\sqrt{p_{\tau}}]$ with $M(w) \leq n$ and $w \neq 0$ then
\begin{align*}
|w| \gg \frac{1}{n^{2^{\tau}-1}}.
\end{align*}
\end{lem}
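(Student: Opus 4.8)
The plan is to bound $|w|$ from below by controlling the norm of $w$ over the field extension $\mathbb{Q}(\sqrt{p_1},\ldots,\sqrt{p_\tau})/\mathbb{Q}$, which has degree $2^\tau$. Write $L = \mathbb{Q}(\sqrt{p_1},\ldots,\sqrt{p_\tau})$ and let $\sigma_1 = \mathrm{id}, \sigma_2, \ldots, \sigma_{2^\tau}$ be the elements of $\mathrm{Gal}(L/\mathbb{Q})$; each $\sigma_i$ acts by $\sqrt{p_j} \mapsto \pm\sqrt{p_j}$ for an independent choice of signs. Since $w \in \mathbb{Z}[\sqrt{p_1},\ldots,\sqrt{p_\tau}]$ is a nonzero algebraic integer, the norm $\Nm_{L/\mathbb{Q}}(w) = \prod_{i=1}^{2^\tau} \sigma_i(w)$ is a nonzero rational integer, hence $|\Nm_{L/\mathbb{Q}}(w)| \geq 1$. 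The first step is to record this: $\prod_{i=1}^{2^\tau} |\sigma_i(w)| \geq 1$.

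The second step is to bound each conjugate $|\sigma_i(w)|$ from above in terms of $M(w) \leq n$. If $w = \sum_{f \in P_\tau} c_f(w)\sqrt{f}$ with $|c_f(w)| \leq M(w) \leq n$, then applying $\sigma_i$ only changes the signs of the terms, so by the triangle inequality $|\sigma_i(w)| \leq \sum_{f \in P_\tau} |c_f(w)| \sqrt{f} \leq n \sum_{f \in P_\tau} \sqrt{f} \leq n \cdot 2^\tau \sqrt{p_1 \cdots p_\tau} =: n \cdot A_\tau$, where $A_\tau$ depends only on $\tau$ (which is fixed). In particular $|\sigma_1(w)| = |w|$, and every other conjugate satisfies $|\sigma_i(w)| \leq n A_\tau$.

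Combining the two steps: $1 \leq |w| \cdot \prod_{i=2}^{2^\tau} |\sigma_i(w)| \leq |w| \cdot (n A_\tau)^{2^\tau - 1}$, which rearranges to
\begin{align*}
|w| \geq \frac{1}{A_\tau^{2^\tau-1}} \cdot \frac{1}{n^{2^\tau-1}} \gg \frac{1}{n^{2^\tau-1}},
\end{align*}
since $A_\tau$ is an effective constant independent of $n$ (and independent of $\alpha$, which does not appear here). This is exactly the claimed bound.

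I do not expect a serious obstacle; the only points requiring minor care are (i) verifying that $\Nm_{L/\mathbb{Q}}(w)$ is genuinely a rational integer, which follows since $w$ is an algebraic integer and the norm of an algebraic integer is a rational integer, and (ii) making sure the constant $A_\tau$ is packaged correctly so that it is absorbed into the $\gg$ notation with $\tau$ regarded as fixed. One could alternatively phrase the argument via the explicit conjugates without invoking Galois theory — just noting that $\prod_i \sigma_i(w)$ is a symmetric integer combination and hence an integer — but the Galois/norm formulation is cleanest. The mild subtlety, if any, is purely bookkeeping: tracking that the product over $i \geq 2$ has exactly $2^\tau - 1$ factors, matching the exponent in the statement.
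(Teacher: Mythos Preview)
Your proof is correct and follows essentially the same route as the paper: both arguments use that $\mathrm{Gal}(L/\mathbb{Q})\cong(\mathbb{Z}/2\mathbb{Z})^\tau$, observe that the norm of the nonzero algebraic integer $w$ has absolute value at least $1$, and then bound the remaining $2^\tau-1$ conjugates from above to extract the lower bound on $|w|$. The only cosmetic difference is that you bound each $|\sigma_i(w)|$ directly by the triangle inequality, whereas the paper routes the conjugate bound through Lemma~\ref{ProductBound} (control of $M(w_1w_2)$); your version is, if anything, slightly more direct.
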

\begin{proof}
For an element $\mathbf{s}:=(s_{1},\ldots,s_{\tau}) \in \{0,1\}^{\tau}$, the field homomorphism $\sigma_{\mathbf{s}}: \mathbb{Q}(\sqrt{p_{1}},\ldots,\sqrt{p_{\tau}}) \rightarrow \mathbb{Q}(\sqrt{p_{1}},\ldots,\sqrt{p_{\tau}})$ generated by $\sigma_{\mathbf{s}}(\sqrt{p_{j}}) = (-1)^{s_{j}}\sqrt{p_{j}}$ is an automorphism (by Lemma \ref{BesicoLem}). By \cite[Lemma 5.4]{GaloisLem}, we have
\begin{align*}
\text{Gal}(\mathbb{Q}(\sqrt{p_{1}},\ldots,\sqrt{p_{\tau}})/\mathbb{Q}) \simeq (\mathbb{Z}/2\mathbb{Z})^{\tau},
\end{align*}
and thus each element of $\text{Gal}(\mathbb{Q}(\sqrt{p_{1}},\ldots,\sqrt{p_{\tau}})/\mathbb{Q})$ is of the form $\sigma_{\mathbf{s}}$, for some $\mathbf{s} \in \{0,1\}^{\tau}$. Since $w$ is a non-zero algebraic integer, the norm of $w$ in $\mathbb{Q}(\sqrt{p_{1}},\ldots,\sqrt{p_{\tau}})$, denoted by $r$, is a natural number. We have
\begin{align*}
r = \left|\prod_{\sigma \in \text{Gal}(\mathbb{Q}(\sqrt{p_{1}},\ldots,\sqrt{p_{\tau}})/\mathbb{Q})}\sigma(w)\right| = \left|\prod_{\mathbf{s} \in \{0,1\}^{\tau}}\sigma_{\mathbf{s}}(w) \right| \geq 1.
\end{align*}
Since $M(\sigma_{\mathbf{s}}(w)) = M(w) \leq n$, by Lemma \ref{ProductBound} we have
\begin{align*}
|w| \geq \left|\prod_{\mathbf{s} \in \{0,1\}^{\tau}, \ \mathbf{s} \neq \mathbf{0}}\sigma_{\mathbf{s}}(w)\right|^{-1} \gg \frac{1}{n^{2^{\tau}-1}}.
\end{align*}
\par \vspace{-\baselineskip} \qedhere
\end{proof}
\begin{lem}
\label{LowerBound2}
For every $n \in \mathbb{N}$ and $w \in S_{\tau}^{*}$ with $M(w) \leq n$ we have
\begin{align*}
\|w\| \gg \frac{1}{n^{2^{\tau}-1}}.
\end{align*}
\end{lem}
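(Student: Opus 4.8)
The plan is to combine Lemma \ref{LowerBound} with the triangle inequality relating $\|w\|$ to $|w - m|$ for the nearest integer $m$. The key observation is that if $w \in S_\tau^*$ with $M(w) \le n$, and $m \in \mathbb{Z}$ is the integer closest to $w$, then $w - m$ still lies in $\mathbb{Z}[\sqrt{p_1},\ldots,\sqrt{p_\tau}]$, with coefficient vector differing from that of $w$ only in the $f=1$ coordinate, where it changes from $c_1(w)$ to $c_1(w) - m$. Since $w \in S_\tau^*$ we have $c_1(w) = 0$, so $c_1(w-m) = -m$, and all other coefficients are unchanged; hence $M(w-m) \le \max\{|m|, M(w)\}$.

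First I would bound $|m|$. Since $w = \sum_{f \in P_\tau^*} c_f(w)\sqrt{f}$ with $|c_f(w)| \le M(w) \le n$, we get $|w| \le n \sum_{f \in P_\tau^*}\sqrt{f} \ll n$, so $|m| \le |w| + 1 \ll n$. Therefore $M(w-m) \ll n$, i.e. $M(w-m) \le C n$ for an effective constant $C = C(\tau)$. Next I would check that $w - m \neq 0$: indeed $w \notin \mathbb{Z}$ because $w \in S_\tau^*$ (a nonzero element with $c_1 = 0$ cannot be a nonzero rational integer by the uniqueness in Lemma \ref{BesicoLem}, and $w \ne 0$ by definition of $S_\tau^*$), so $w - m \ne 0$. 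Now apply Lemma \ref{LowerBound} to $w - m \in \mathbb{Z}[\sqrt{p_1},\ldots,\sqrt{p_\tau}]$ with the bound $M(w-m) \le Cn$: this yields $|w - m| \gg (Cn)^{-(2^\tau - 1)} \gg n^{-(2^\tau-1)}$, absorbing the constant $C^{2^\tau-1}$ into the implied constant. Since $\|w\| = |w - m|$, this is exactly the claimed bound.

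The only mild subtlety — and the place where care is needed rather than a genuine obstacle — is making sure the constant inflation from $M(w) \le n$ to $M(w-m) \le Cn$ is harmless; it is, because Lemma \ref{LowerBound}'s conclusion is an asymptotic lower bound in which a bounded rescaling of $n$ only affects the implied constant. I would also note explicitly that the implied constant depends only on $\tau$, consistent with the conventions of Section \ref{Notation}. No new machinery is required: the proof is a two-line deduction from Lemma \ref{LowerBound} once the coefficient bookkeeping for $w - m$ is recorded.
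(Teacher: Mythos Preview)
Your proof is correct and follows essentially the same approach as the paper: both arguments pass from $w$ to $w-m$ for the nearest integer $m$, bound $|m|\ll n$ via $|w|\le n\sum_{f\in P_\tau^*}\sqrt{f}$, observe that $M(w-m)\ll n$, and then invoke Lemma~\ref{LowerBound}. Your version is slightly more explicit in justifying $w-m\neq 0$ via the uniqueness in Lemma~\ref{BesicoLem}, which the paper leaves implicit.
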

\begin{proof}
With such a choice of $w$, there exists an integer 
\begin{align*}
d \in \left[-n\prod_{j=1}^{\tau}(1+\sqrt{p_{j}}),n\prod_{j=1}^{\tau}(1+\sqrt{p_{j}})\right]
\end{align*}
so that
\begin{align*}
\|w\| = |w - d|.
\end{align*}
It should be noted that $M(w-d) \leq n\left(\prod_{j=1}^{\tau}(1+\sqrt{p_{j}})\right)$, and the result follows from Lemma~\ref{LowerBound}.
\end{proof}
\begin{lem}
\label{AlgorithmicIdea}
 Whenever $\alpha \in [0,1)$ and $n \in \mathbb{N}$, there exists $w_{n} \in S_{\tau}$ satisfying $M(w_{n}) \leq n$ with
\begin{align*}
0 \leq \alpha - \{w_{n}\} \ll \frac{1}{n^{2^{\tau}-1}}.
\end{align*}
\end{lem}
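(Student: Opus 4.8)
The plan is to build $w_n$ by a greedy, multi-scale approximation of $\alpha$: one peels off $\alpha$ in pieces, working at geometrically increasing resolutions $m_i=2^i$, and at the $i$-th stage one corrects the current residual by a \emph{bounded} integer multiple of a ``near-integer'' element of $\mathcal{S}_{\tau}^{*}$ of height $\le m_i$. The two inputs are Lemma~\ref{UpperBound}, which at resolution $m$ yields $w_0\in\mathcal{S}_{\tau}^{*}$ with $M(w_0)\le m$ and, after possibly replacing $w_0$ by $-w_0$ (which leaves $M$ unchanged and sends $\{w_0\}$ to $1-\{w_0\}$), with $\{w_0\}=\|w_0\|\le m^{-K}$, where $K:=2^{\tau}-1$; and Lemma~\ref{LowerBound2}, which gives $\{w_0\}=\|w_0\|\gg m^{-K}$ with an implied constant $c_0=c_0(\tau)$ independent of $m$. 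So at every resolution we have an element with $\{w_0\}\asymp m^{-K}$.

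Here is the construction. Put $\rho_0:=\alpha$, so $0\le\rho_0<1=m_0^{-K}$. Inductively assume $0\le\rho_{i-1}<m_{i-1}^{-K}$. Apply Lemma~\ref{UpperBound} with parameter $m_i$ and normalize the sign to get $w_0^{(i)}\in\mathcal{S}_{\tau}^{*}$ with $M(w_0^{(i)})\le m_i$ and $c_0 m_i^{-K}\le\{w_0^{(i)}\}\le m_i^{-K}$, the lower bound from Lemma~\ref{LowerBound2}. Set
\[
j_i:=\left\lfloor \frac{\rho_{i-1}}{\{w_0^{(i)}\}}\right\rfloor,\qquad w^{(i)}:=j_i\, w_0^{(i)}\in\mathcal{S}_{\tau},\qquad \rho_i:=\rho_{i-1}-j_i\{w_0^{(i)}\}.
\]
Then $0\le\rho_i<\{w_0^{(i)}\}\le m_i^{-K}$, so the induction continues, and --- the crucial point for the height ---
\[
j_i\le\frac{\rho_{i-1}}{\{w_0^{(i)}\}}\le\frac{m_{i-1}^{-K}}{c_0\, m_i^{-K}}=\frac{2^K}{c_0},
\]
whence $M(w^{(i)})=j_i M(w_0^{(i)})\le(2^K/c_0)\,2^i$. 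Take $W:=\sum_{i=1}^{r}w^{(i)}\in\mathcal{S}_{\tau}$; then $M(W)\le\sum_{i=1}^{r}M(w^{(i)})\ll 2^r$, while $\alpha-\{W\}=\rho_r<2^{-rK}$ provided one knows $\{W\}=\sum_{i=1}^{r}j_i\{w_0^{(i)}\}$, i.e.\ that no reduction mod $1$ occurs in forming the fractional part of $W$; but each partial sum $\sum_{i\le\ell}j_i\{w_0^{(i)}\}$ equals $\alpha-\rho_\ell\in[0,\alpha]\subseteq[0,1)$, so there is no wrap-around. Choosing $r$ to be the largest integer with $(2^K/c_0)\,2^{r+1}\le n$ (and disposing of the finitely many small $n$, for which $n^{-K}\gg 1$, by $w_n=0$) gives $w_n:=W$ with $M(w_n)\le n$ and $0\le\alpha-\{w_n\}\ll n^{-K}$, as required.

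The one thing that really has to be arranged correctly --- and what makes the exponent $K=2^{\tau}-1$ come out sharp --- is the geometric distribution of the ``height budget'' across scales. A single run of ``multiples of one small element'' would need a multiplier of order $m^{K}$ to sweep the residual across all of $[0,1)$, so it would cost height $\asymp m^{K+1}$ and lose a power in the exponent. In the multi-scale version the residual fed into stage $i$ is already of size $<m_{i-1}^{-K}$, so only a bounded number of copies of the stage-$i$ element are needed; the heights $M(w^{(i)})\ll 2^i$ then sum geometrically to $\ll 2^r\asymp n$, while the residual contracts by the fixed factor $2^K$ per stage and reaches $\ll n^{-K}$ after $O(\log n)$ stages. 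Everything beyond Lemmas~\ref{UpperBound} and~\ref{LowerBound2} is elementary bookkeeping, and the procedure is explicitly constructive, which is presumably why the lemma is labelled an ``algorithmic idea''.
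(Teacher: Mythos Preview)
Your proof is correct and is essentially the same argument as the paper's: a multi-scale greedy approximation using at each dyadic scale $2^i$ an element $x_i\in\mathcal{S}_{\tau}^{*}$ with $\{x_i\}\asymp 2^{-iK}$ (supplied by Lemmas~\ref{UpperBound} and~\ref{LowerBound2} after a sign flip), bounded multipliers $j_i\le 2^K/c_0$, the observation that the partial sums of $j_i\{x_i\}$ stay in $[0,1)$ so no reduction occurs, and a geometric sum for the height. The only differences are notational; your closing paragraph explaining why a single-scale approach would lose a power is a useful addition not in the paper.
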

\begin{proof}
Let $j \geq 1$ be an integer, using Lemma \ref{UpperBound} and multiplication by $-1$ if necessary we can find an element $x_{j} \in S_{\tau}^{*}$ which satisfies $M(x_{j}) \leq 2^{j}$ and 
\begin{align*}
0<\{x_{j}\}\leq \frac{1}{(2^{j})^{2^{\tau}-1}}.
\end{align*}
By Lemma \ref{LowerBound2}, we may strengthen the above inequality to
\begin{align*}
\frac{A_{\tau}}{(2^{j})^{2^{\tau}-1}}\leq \{x_{j}\}\leq \frac{1}{(2^{j})^{2^{\tau}-1}}.
\end{align*}
for some $A_{\tau} > 0$.
Form the sequences $(y_{h})_{h=1}^{\infty}$ and $(\alpha_{h})_{h=1}^{\infty}$ inductively as follows; choose $y_{1}$ to be the largest non-negative integer so that 
\begin{align*}
\alpha - y_{1}\{x_{1}\} \geq 0 
\end{align*}
and set $\alpha_{1} = \alpha -y_{1}\{x_{1}\}$. For convenience, we estimate that
\begin{align*}
y_{1} \leq \mu,
\end{align*}
where
\begin{align*}
\mu = \left\lceil \frac{2^{2^{\tau}-1}}{A_{\tau}}\right\rceil.
\end{align*}
Furthermore, we estimate that 
\begin{align*}
0\leq \alpha_{1} \leq \frac{1}{2^{2^{\tau}-1}}.
\end{align*}
Let $y_{h+1}$ be the largest non-negative integer so that 
\begin{align*}
\alpha_{h}-y_{h+1}\{x_{h+1}\} \geq 0.
\end{align*}
Then we set
\begin{align*}
\alpha_{h+1} = \alpha_{h}-y_{h+1}\{x_{h+1}\}.
\end{align*}
In this setting, we observe that
\begin{align*}
0\leq \alpha_{h} \leq \frac{1}{(2^{h})^{2^{\tau}-1}}.
\end{align*}
With $\alpha_{0} = \alpha$ and $h \in 0,1,2,\ldots$ we have
\begin{align*}
0 \leq \alpha_{h+1} = \alpha_{h}-y_{h+1}\{x_{h+1}\} \leq \frac{1}{(2^{h})^{2^{\tau}-1}}-y_{h+1}\frac{A_{\tau}}{(2^{h+1})^{2^{\tau}-1}},
\end{align*}
thus
\begin{align*}
y_{h+1} \leq \mu.
\end{align*}
Now we observe that 
\begin{align}
\label{MajorInequality}
0\leq \alpha_{t} = \alpha - \sum_{h=1}^{t}y_{h}\{x_{h}\} \leq \frac{1}{(2^{t})^{2^{\tau}-1}}.
\end{align}
Put $\omega(t) = \sum_{h=1}^{t}y_{h}x_{h}$. Since $\sum_{h=1}^{t}y_{h}\{x_{h}\} \in [0,1)$ it holds true that
\begin{align*}
\sum_{h=1}^{t}y_{h}\{x_{h}\} = \left\{\sum_{h=1}^{t}y_{h}\{x_{h}\}\right\} = \left\{\sum_{h=1}^{t}y_{h}x_{h} \right\} = \left\{\omega(t)\right\}.
\end{align*}
Thus, by \eqref{MajorInequality} we have
\begin{align}
\label{MajorInequality2}
0 \leq \alpha -\{\omega(t)\} \leq \frac{1}{(2^t)^{2^{\tau}-1}}.
\end{align}
Here we bound 
\begin{align}
\label{MajorInequality3}
M(\omega(t)) = M\left(\sum_{h=1}^{t}y_{h}x_{h}\right) \leq \sum_{h=1}^{t}\mu\cdot 2^{h} \leq \mu \cdot 2^{t+1}.
\end{align}
If $n \geq 4\mu$ we can show that $\omega\left(\left\lfloor\log_{2}(\frac{n}{2\mu})\right\rfloor\right)$ satisfies
\begin{align*}
M\left(\omega\left(\left\lfloor\log_{2}\left(\frac{n}{2\mu}\right)\right\rfloor\right)\right) \leq n
\end{align*}
by \eqref{MajorInequality3}, and
\begin{align*}
0 \leq \alpha - \left\{\omega\left(\left\lfloor\log_{2}(\frac{n}{2\mu})\right\rfloor\right)\right\} &\leq \left(2^{\left\lfloor\log_{2}(\frac{n}{2\mu})\right\rfloor}\right)^{-2^{\tau}+1}\\
&\hspace{1cm}\ll \left(2^{\log_{2}(\frac{n}{2\mu})}\right)^{-2^{\tau}+1} \ll \frac{1}{n^{2^{\tau}-1}}
\end{align*}
by \eqref{MajorInequality2}.
\end{proof}
\begin{lem}
\label{LastOne}
For every $n \in \mathbb{N}$ and $\alpha \in \R$ there exist integers \[0<c_{f}\leq n,\] so that
\begin{align*}
\left\|\sum_{f \in P_{\tau}^{*}}c_{f}\sqrt{f} -\alpha\right\| \ll \frac{1}{n^{2^{\tau}-1}}.
\end{align*}
\end{lem}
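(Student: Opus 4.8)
The plan is to deduce Lemma~\ref{LastOne} from Lemma~\ref{AlgorithmicIdea} by converting the element $w_n \in S_\tau$ with $\|w_n - \alpha\|$ small into an \emph{honest} sum $\sum_{f \in P_\tau^*} c_f \sqrt{f}$ with the coefficients $c_f$ all \emph{positive} and bounded by $n$ (up to an adjustment of the size parameter). First I would reduce to the case $\alpha \in [0,1)$: since $\|\cdot\|$ only depends on $\alpha \bmod 1$, replacing $\alpha$ by $\{\alpha\}$ changes nothing, so Lemma~\ref{AlgorithmicIdea} applies and furnishes $w_n = \sum_{f \in P_\tau^*} c_f(w_n)\sqrt{f} \in S_\tau$ with $M(w_n) \le n$ and $0 \le \{\alpha\} - \{w_n\} \ll n^{-(2^\tau - 1)}$, hence $\|w_n - \alpha\| = \|w_n - \{\alpha\}\| \ll n^{-(2^\tau-1)}$.

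The remaining issue is purely cosmetic: the coefficients $c_f(w_n)$ are integers of absolute value at most $n$, but they may be zero or negative, whereas the statement demands $0 < c_f \le n$. To fix this I would add a fixed auxiliary element: let $g_f$ be positive integers, independent of $n$, chosen so that $\sum_{f \in P_\tau^*} g_f \sqrt{f}$ is an integer — for instance one can take each term to contribute an integer, or more cheaply invoke that $\sqrt{f}$ has bounded-height integer combinations with integral value is unnecessary; instead simply note that shifting every coefficient by a fixed constant $N_0$ changes $w_n$ by the fixed real number $\Sigma_0 := N_0 \sum_{f \in P_\tau^*}\sqrt{f}$, and one can instead run Lemma~\ref{AlgorithmicIdea} with target $\alpha' := \alpha + \Sigma_0$, so that the output $w_n'$ satisfies $\|(w_n' + \Sigma_0\text{-shift correction})\|$ \dots. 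The clean version: apply Lemma~\ref{AlgorithmicIdea} to the real number $\{\alpha - N_0\sum_{f\in P_\tau^*}\sqrt f\}$ where $N_0 = n$ (or $N_0 = \lfloor n/2\rfloor$), obtaining $w^* \in S_\tau$ with $M(w^*) \le m$ for a size parameter $m$ we control, and then set $c_f := c_f(w^*) + N_0$; each $c_f$ then lies in $[N_0 - m,\, N_0 + m]$, which is contained in $(0, n]$ once $m \le N_0$ and $N_0 + m \le n$, i.e.\ once $m \le n/2$ and $N_0 = \lceil n/2\rceil$ roughly. Since $\sum_{f\in P_\tau^*} c_f\sqrt f = w^* + N_0\sum_{f\in P_\tau^*}\sqrt f$, we get $\big\|\sum_{f\in P_\tau^*}c_f\sqrt f - \alpha\big\| = \big\|w^* - \{\alpha - N_0\sum\sqrt f\}\big\| \ll m^{-(2^\tau-1)} \ll n^{-(2^\tau-1)}$, using $m \gg n$.

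The one point requiring a little care — and the step I expect to be the mild obstacle — is the bookkeeping of constants: I must choose the split between the fixed shift $N_0$ and the available range $m$ for $M(w^*)$ so that simultaneously (i) $m \ge c\,n$ for some absolute $c > 0$, guaranteeing the error term is $O(n^{-(2^\tau-1)})$ with the implied constant absorbing $c^{-(2^\tau-1)}$, and (ii) $N_0 - m > 0$ and $N_0 + m \le n$, guaranteeing $0 < c_f \le n$. Taking, say, $N_0 = \lceil n/2 \rceil$ and applying Lemma~\ref{AlgorithmicIdea} with parameter $\lfloor n/4 \rfloor$ in place of $n$ works: then $M(w^*) \le n/4$, so $c_f \in [\,n/4,\, 3n/4\,] \subset (0,n]$ for $n$ large, and the error is $\ll (n/4)^{-(2^\tau-1)} \ll n^{-(2^\tau-1)}$. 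For the finitely many small $n$ not covered by "$n$ large'' one adjusts the implied constant (permissible since $D_\tau$ is allowed to depend on $\tau$, equivalently on $k$), completing the proof. No new machinery beyond Lemmas~\ref{BesicoLem} and~\ref{AlgorithmicIdea} is needed; the argument is a rescaling-and-shift reduction.
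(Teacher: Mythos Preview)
Your proposal is correct and follows essentially the same route as the paper: apply Lemma~\ref{AlgorithmicIdea} to the shifted target $\{\alpha - N_0\sum_{f\in P_\tau^*}\sqrt f\}$ with a reduced size parameter, then add back the uniform shift $N_0$ to force all coefficients positive and bounded by $n$. The paper uses $N_0=\lfloor n/2\rfloor$ and size parameter $\lfloor n/3\rfloor$ (assuming $n\ge 6$) where you use $\lceil n/2\rceil$ and $\lfloor n/4\rfloor$, but this is an inessential difference.
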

\begin{proof}
We can assume that $n \geq 6$. Put 
\begin{align*}
\rho = \sum_{f \in P_{\tau}^{*}}\left\lfloor\frac{n}{2}\right\rfloor\sqrt{f}
\end{align*}
and $\alpha_{0} = \{\alpha -\rho\}$. By Lemma \ref{AlgorithmicIdea} there exists $\kappa \in S_{\tau}$ so that \text{ }\text{ } $M(\kappa) \leq \lfloor n/3\rfloor$ and 
\begin{align*}
0 \leq \alpha_{0} - \{\kappa\} \ll \frac{1}{n^{2^{\tau}-1}}.
\end{align*}
In particular $\kappa+\rho$ is of the form $\sum_{f \in P_{\tau}^{*}}d_{f}\sqrt{f}$ where 
\begin{align*}
\left\lfloor\frac{n}{2}\right\rfloor - \left\lfloor\frac{n}{3}\right\rfloor\leq d_{f} \leq \left\lfloor\frac{n}{2}\right\rfloor+\left\lfloor\frac{n}{3}\right\rfloor,
\end{align*}
and
\begin{align*}
\|\kappa + \rho - \alpha\| \ll \frac{1}{n^{2^{\tau}-1}}.
\end{align*}
\par \vspace{-\baselineskip} \qedhere
\end{proof}
We are finally in a position to prove Theorem \ref{main}. Let
\begin{align*}
\tau = \lfloor \log_{2}(k+1) \rfloor.
\end{align*}
We can assume that
\begin{align*}
n \geq 4 \prod_{j=1}^{\tau}p_{j}.
\end{align*}
Using Lemma \ref{LastOne} we can find integers 
\begin{align*}
0<c_{f} \leq \left\lfloor\frac{\sqrt{n}}{\sqrt{p_{1}\ldots p_{\tau}}}\right\rfloor
\end{align*}
so that 
\begin{align*}
\left\|\sum_{f \in P_{\tau}^{*}}c_{f}\sqrt{f} -\alpha\right\| \ll \left\lfloor\frac{\sqrt{n}}{\sqrt{p_{1}\ldots p_{\tau}}}\right\rfloor^{-2^{\tau}+1} \ll \frac{1}{n^{2^{\tau-1}-1/2}}.
\end{align*}
Let $1=b_{1}=\ldots=b_{k+1-2^{\tau}}$. Then
\begin{align*}
\left\|\sum_{f \in P_{\tau}^{*}}\sqrt{c_{f}^2f} + \sum_{d=1}^{k+1-2^{\tau}}\sqrt{b_{d}} -\alpha\right\|\ll \frac{1}{n^{2^{\tau-1}-1/2}},
\end{align*}
which concludes the proof of Theorem \ref{main}.
\section{Acknowledgements}
\begin{itemize}
\item The author extends heartfelt thanks to Bryce Kerr for his thorough manuscript review. Bryce's insightful suggestion to select $x_j$ with $M(x_j) \leq 2^j$ (rather than $M(x_{j}) \leq j$) in Lemma \ref{AlgorithmicIdea} directly improved the exponent in Theorem \ref{main} from $\frac{-k}{8}+\frac{1}{8}$ to $\frac{-k}{4}+\frac{1}{4}$. Bryce also simplified the proof of Lemma \ref{LowerBound}.

\item The author expresses deep gratitude to Igor Shparlinski for recommending Steinerberger's paper \cite{Steinerberger} and for invaluable insights that simplified the proof of Lemma \ref{RecDiff}.

\item The author thanks Stefan Steinerberger for his continued support and encouragement.

\item Finally, the author acknowledges the support of an Australian Government Research Training Program (RTP) Scholarship, which enabled the completion of this work.
\end{itemize}

\end{document}